\newcommand{\Thm}[1]{Theorem~\ref{th:#1}}
\newcommand{\Prop}[1]{Proposition~\ref{prop:#1}}
\newcommand{\Lem}[1]{Lemma~\ref{lem:#1}}
\newcommand{\Rem}[1]{Remark~\ref{rem:#1}}
\newcommand{\Eq}[1]{\eqref{eq:#1}}
\def\rom#1{\mbox{\leavevmode\skip@\lastskip\unskip\/\ifdim\skip@=\z@\else\hskip\skip@\fi{\rm{#1}}}}
\renewcommand{\b}{\beta}
\newcommand{\gm}{\gamma}\newcommand{\dl}{\delta}
\newcommand{\eps}{\varepsilon}
\newcommand{\lm}{\lambda}
\newcommand{\sg}{\sigma}
\newcommand{\ph}{\varphi}
\newcommand{\om}{\omega}
\newcommand{\Sg}{\Sigma}
\newcommand{\bfone}{\boldsymbol{1}}
\newcommand{\D}{\mathbb{D}}
\newcommand{\N}{\mathbb{N}}
\newcommand{\Q}{\mathbb{Q}}
\newcommand{\R}{\mathbb{R}}
\newcommand{\cB}{\mathcal{B}}
\newcommand{\cC}{\mathcal{C}}
\newcommand{\cE}{\mathcal{E}}
\newcommand{\cF}{\mathcal{F}}
\newcommand{\cM}{\mathcal{M}}
\newcommand{\cX}{\mathcal{X}}
\newcommand{\fH}{\mathscr{H}}
\newcommand{\fP}{\mathscr{P}}
\newcommand{\la}{\langle}\newcommand{\ra}{\rangle}
\newcommand{\wg}{\wedge}
\newcommand{\loc}{\mathrm{loc}}
\newcommand{\BVloc}{\dot{BV}_{\!\loc}}
\newcommand{\FCb}{\mathcal{F}C_b}
\newcommand{\Cp}{\mathop{\mathrm{Cap}}\nolimits}
\newcommand{\esssup}{\mathop{\text{\rm -ess\,sup}}}
\begin{document}\pagestyle{plain}
\allowdisplaybreaks[3]
\chapter*{Functions of locally bounded variation on Wiener spaces}

\author[M. Hino]{Masanori Hino}

\address{Graduate School of Engineering Science, Osaka University\\
{hino@sigmath.es.osaka-u.ac.jp}}

\begin{abstract}
We introduce the concept of functions of locally bounded variation on abstract Wiener spaces and study their properties.
Some nontrivial examples and applications to stochastic analysis are also discussed.

\end{abstract}
\markright{Functions of locally bounded variation on Wiener spaces} 
\body

\section{Introduction}
Functions of bounded variation (BV functions) on abstract Wiener spaces were first studied in \cite{Fu,FH} for their applications in stochastic analysis, but recently BV functions on infinite dimensional spaces have attracted attention for a variety of reasons.
In this paper, we newly introduce the space $\BVloc$ of functions of locally bounded variation (local BV functions) on the abstract Wiener space $(E,H,\mu)$.
Needless to say, there could be several ways of localizing the concept of bounded variation.
In this article, we adopt the ideas in the theory of (quasi-)regular Dirichlet forms, which are suitable for application to stochastic analysis.
Indeed, we show that a Dirichlet form of type
\[
\cE^\rho(f,g)=\frac12\int_E \la\nabla f,\nabla g\ra_H\, \rho\,d\mu
\]
associated with a nonnegative function $\rho$ in $\BVloc$ with some extra assumptions provides a diffusion process that has the Skorokhod representation (\Thm{decomp}). 
This result is regarded as a natural generalization of \cite[Theorem~4.2]{FH}, where $\rho$ is assumed to be a BV function instead. We also consider the classical Wiener space on $\R^d$ as $E$ and provide a sufficient condition for an open set $O$ of $\R^d$ so that the indicator function of the set of all paths staying in the closure $\overline O$ is a local BV function.
Accordingly, we can construct the (modified) reflecting Ornstein--Uhlenbeck process with the Skorokhod representation on the set of paths staying in $\overline{O}$ under a rather weak condition on $O$.
This is a study related to another paper~\cite{HU}, in which a sufficient condition was given for the above-mentioned indicator function to be a BV function on either pinned path spaces or one-sided pinned spaces.

The remainder of this paper is organized as follows.
In Section~2, we introduce the concept of local BV functions on a Wiener space and study their properties.
In Section~3, we provide a sufficient condition for a class of indicator functions to be local BV functions.

\section{The space $\BVloc$ on Wiener spaces}
Let $(E,H,\mu)$ be an abstract Wiener space. That is,
$E$ is a separable Banach space, $H$ is a separable Hilbert
space densely and continuously embedded in $E$, and $\mu$ is a
Gaussian measure on $E$ that satisfies
\[
  \int_E \exp\left(\sqrt{-1}l(z)\right)\mu(dz)=\exp\left(-|l|_H^2/2\right),
  \quad l\in E^*.
\]
Here, we regard the topological dual of $E$, $E^*$, as a dense subspace of $H$ by the natural identification 
$H^*\simeq H$. The inner product and the norm
of $H$ are denoted by $\la\cdot,\cdot\ra_H$ and $|\cdot|_H$, respectively.
We set
\[
\FCb^1=\left\{u\colon E\to\R\left|\,\begin{array}{ll} u(z)=f(l_1(z),\ldots,l_m(z)),\ l_1,\ldots,l_m\in E^*,\\ f\in C_b^1(\R^m) \mbox{ for some }m\in\N \end{array}\right.\right\}
\]
and define $\FCb^1(E^*)$ as  the set of all linear combinations of $H$-valued functions $u(\cdot)l$ on $E$ with $u\in \FCb^1$ and $l\in E^*\subset H$.
For $u\in\FCb^1$, $\nabla u$ denotes the $H$-derivative of $u$, which is an $H$-valued function on $E$ that is characterized by the identity
\[
  \la\nabla u(z),l\ra_H =\lim_{\eps\to0}(u(z+\eps l)-u(z))/\eps, \quad l\in E^*\subset H\subset E.
\]
In the same way, $\nabla u$ is defined for $u\in\FCb^1(E^*)$ as an $H\otimes H$-valued function on $E$.

Let $(X,\cX,\nu)$ be a measure space and let $Y$ be a separable Hilbert space with norm $|\cdot|_Y$. For $p\in[1,+\infty]$, we denote by $L^p(X\to Y;\nu)$ the space of all $Y$-valued $L^p$-functions on $(X,\cX,\nu)$ with the norm defined as
\[
 \|f\|_{L^p(\nu)}=
 \begin{cases}
 \displaystyle \left(\int_X |f(x)|_Y^p\,\nu(dx)\right)^{1/p},& p\in[1,+\infty),\\
 \displaystyle \nu\esssup_{x\in E}|f(x)|_Y, &p=+\infty.
 \end{cases}
\]
As usual, two functions that are equal a.e.\ are identified. 
If $X=E$ or $\nu=\mu$ or $Y=\R$, we often omit these symbols from the notation. In particular, $L^p$ and $\|\cdot\|_{p}$ represent $L^p(E;\mu)$ and $\|\cdot\|_{L^p(\mu)}$, respectively.
For $p\in[1,+\infty)$, the closure of $\FCb^1$ (resp.\ $\FCb^1(E^*)$) with respect to the norm $(\|\cdot\|_p^p+\|\nabla\cdot\|_p^p)^{1/p}$ will be denoted by $\D^{1,p}$ (resp.\ $\D^{1,p}(H)$).
The operator $\nabla$ extends to a continuous map from $\D^{1,p}$ to $L^p(E\to H)$.
Let $\nabla^*$ denote the adjoint operator of $\nabla$.
This is regarded as a bounded operator from $\D^{1,p}(H)$ to $L^p$ for $p\in(1,\infty)$.
We note that both $\nabla$ and $\nabla^*$ have the local property in the sense that, if $f\in\D^{1,2}$ (resp.\ $G\in \D^{1,2}(H)$) satisfies $f=0$ (resp.\ $G=0$) $\mu$-a.e.\ on some measurable set $A$, then $\nabla f=0$ (resp.\ $\nabla^* G=0$) $\mu$-a.e.\ on $A$.
See, e.g., \cite[Propositions~1.3.16 and 1.3.15]{N} for the proof.

Let $L(\log L)^{1/2}$ denote the set of all real-valued $\mu$-measurable functions $f$ on $E$ such that $f(0\vee\log|f|)^{1/2}\in L^1$.
For $\rho\in L(\log L)^{1/2}$, define
\begin{align*}
  V(\rho)&=\sup\left\{
  \int_E(\nabla^* G)\rho\,d\mu \;\vrule\; G\in\FCb^1(E^*),\ |G(z)|_H\le 1\mbox{ for every } z\in E\right\}\\ &(\le+\infty).
\end{align*}
 The function space $BV$ on $E$ is defined as
 \[
 BV=\{\rho\in L(\log L)^{1/2}\mid V(\rho)<\infty\}.
 \]
  A function in $BV$ is called a BV function or a function of bounded variation.
  We remark that the following inequality holds for $p\in(1,+\infty]$ and $\rho\in BV\cap L^p$.
  \begin{equation}\label{eq:bvp}
  \int_E (\nabla^* G)\rho\,d\mu
  \le V(\rho)\|G\|_\infty,
  \quad G\in \D^{1,q}(H),
  \end{equation}
  where $q$ is the conjugate exponent of $p$. 
  
For a function space $\cC$ on $E$ and a $\mu$-measurable subset $A$ of $E$, $\cC_A$ denotes the space of all functions in $\cC$ vanishing $\mu$-a.e.\ on $E\setminus A$, and $\cC_b$ denotes the set of all bounded functions in $\cC$. Moreover, $\cC_{A,b}$ denotes $\cC_A\cap \cC_b$.

For $\xi\in L^1$, $F^\xi$ denotes the support of the measure $|\xi|\cdot\mu$.
For $\xi\in L^1$ with $\xi\ge0$ $\mu$-a.e., we define a bilinear form $(\cE,\FCb^1)$ on $L^2(F^\xi,\xi\cdot\mu)$ by
\[
\cE^\xi(f,g)=\frac12\int_E \la \nabla f,\nabla g\ra_H\,\xi\, d\mu,
\quad f,g\in\FCb^1.
\]
The set of all $\xi$ such that $(\cE,\FCb^1)$ is closable on $L^2(F^\xi;\xi\cdot\mu)$ is denoted by $QR$.
For $\xi\in QR$, the closure of $(\cE,\FCb^1)$, denoted by $(\cE^\xi,\cF^\xi)$, is a quasi-regular Dirichlet form on $L^2(F^\xi;\xi\cdot\mu)$ (see, e.g., \cite{RS} and \cite[Theorem~2.1]{Fu} for the proof).
The space $\cF^\xi$ is regarded as a Hilbert space by using the inner product $(f,g)\mapsto\cE^\xi(f,g)+\int_{F^\xi} f g\,\xi\,d\mu$.
The associated capacity $\Cp^\xi$ is then defined as 
\[
\Cp^\xi(A)=\inf\left\{\cE^\xi(f,f)+\|f\|_{L^2(\xi\cdot\mu)}^2\;\vrule\;\begin{array}{l} f\in\cF^\xi\text{ and } f\ge1\ \xi\cdot\mu\text{-a.e.\ on}\\\text{some open set including $A$}\end{array}\right\}
\]
for $A\subset F^\xi$.
The concept ``$\cE^\xi$-quasi-everywhere'' ($\cE^\xi$-q.e.) is based on this capacity.
An increasing sequence $\{F_k\}_{k=1}^\infty$ of closed sets in $F^\xi$ is called an $\cE^\xi$-nest if $\lim_{k\to\infty}\Cp^\xi(F^\xi\setminus F_k)=0$.
It is known that $\{F_k\}_{k=1}^\infty$ is an $\cE^\xi$-nest if and only if $\bigcup_{k=1}^\infty \cF^\xi_{F_k}$ is dense in $\cF^\xi$.
In this situation, the set $S^\xi$ of all smooth measures is described as the totality of all positive Borel measures $\nu$ on $F^\xi$ such that $\nu$ charges no set of zero capacity $\Cp^\xi$ and there exists an $\cE^\xi$-nest $\{F_k\}_{k=1}^\infty$ such that $\nu(F_k)<\infty$ for all $k$.
A function $f$ on $E$ is called $\cE^\xi$-quasi-continuous if there exists an $\cE^\xi$-nest $\{F_k\}_{k=1}^\infty$ such that $f|_{F_k}$ is continuous on $F_k$ for every $k$.
Any function $f\in\cF^\xi$ has an $\cE^\xi$-quasi-continuous modification $\tilde f$. 
When $\xi\equiv1$, we write $\cE$, $\cF$, and $\Cp$ instead of $\cE^\xi$, $\cF^\xi$, and $\Cp^\xi$, respectively.
We note that $\cF=\D^{1,2}$.
For a subset $A$ of $E$, $e_A$ denotes the equilibrium potential of $A$ with respect to $(\cE,\cF)$: that is, $e_A$ attains the infimum of $\{\cE(f,f)+\|f\|_2^2\mid f\in \D^{1,2}\mbox{ and } \tilde f\ge1 \mbox{ $\cE$-q.e.\ on }A\}$, which is equal to $\Cp(A)$ (cf.~\cite[Theorem~2.1.5]{FOT}).
Henceforth, we will always assume that $e_A$ is $\cE$-quasi-continuous by itself.
Any $G\in\D^{1,2}(H)$ also has an $\cE$-quasi-continuous modification $\tilde G$ (cf.\ \cite[Chapter~VII, Theorem~1.3.1]{BH}).

One of the fundamental properties of BV functions is given by the following theorem.
\begin{theorem}[cf.~{\cite[Theorem~3.9]{FH}}]\label{th:BV}
  For $\rho\in BV$, there exists a positive finite measure $\nu$ on $E$ and an $H$-valued Borel function $\sg$ on $E$ such that $|\sg|_H=1$ $\nu$-a.e., and for every $G\in \FCb^1(E^*)$,
 \begin{equation}\label{eq:measure}
 \int_E(\nabla^* G)\rho\,d\mu=\int_E \la G,\sg\ra_H \,d\nu.
 \end{equation}
 The measure $\nu$ belongs to $S^{|\rho|+1}$.
 If $\rho\in QR$ in addition, then $\nu|_{E\setminus F^\rho}=0$ and $\nu|_{F^\rho}\in S^\rho$.
 Also, $\nu$ and $\sg$ are uniquely determined in the following sense: if $\nu'$ and $\sg'$ are another pair satisfying \Eq{measure} for all $G\in \FCb^1(E^*)$, then $\nu=\nu'$ and $\sg=\sg'$ $\nu$-a.e.
 \end{theorem}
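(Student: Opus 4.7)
The plan is to combine a Riesz-type representation of the functional
\[
L(G):=\int_E(\nabla^*G)\rho\,d\mu,\qquad G\in\FCb^1(E^*),
\]
with a capacity-based analysis of the resulting measure. The definition of $V(\rho)$ together with linearity and positive homogeneity of $L$ yields $|L(G)|\le V(\rho)\cdot\bigl\||G|_H\bigr\|_\infty$ for every $G\in\FCb^1(E^*)$. To produce the measure, I would use a finite-dimensional approximation: fix a CONS $\{e_j\}_{j\ge1}\subset E^*$ of $H$, set $\pi_nz=\sum_{j=1}^n e_j(z)\,e_j$, let $P_n$ denote the associated conditional expectation operator, and put $\rho_n:=P_n\rho$. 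A standard commutation argument between $P_n$ and $\nabla^*$ gives $V(\rho_n)\le V(\rho)$ and $\rho_n\to\rho$ in $L(\log L)^{1/2}$. Since $\rho_n\,d\mu$ descends to a finite-dimensional BV measure with Gaussian reference, classical De~Giorgi--Federer theory yields a finite positive Borel measure $\nu_n$ on $E$ and a Borel $\sg_n\colon E\to\mathrm{span}\{e_1,\ldots,e_n\}$ with $|\sg_n|_H=1$ $\nu_n$-a.e., $\nu_n(E)\le V(\rho_n)$, and \Eq{measure} (with $\rho,\nu,\sg$ replaced by $\rho_n,\nu_n,\sg_n$) valid for every $G\in\FCb^1(E^*)$ depending on only the first $n$ coordinates.

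Next I would pass to a limit. Tightness of $\{\nu_n\}$ on $E$ follows from the quasi-regularity of $(\cE^{|\rho|+1},\cF^{|\rho|+1})$ combined with the uniform (in $n$) capacity estimate produced below. Extracting a weak subsequential limit $\nu$ and running a vector-valued weak-compactness/Mazur argument on $\sg_n\cdot\nu_n$ produces a Borel $\sg\colon E\to H$ with $|\sg|_H\le1$ $\nu$-a.e.\ such that \Eq{measure} holds in the limit on $\FCb^1(E^*)$. Restricting $\nu$ to $\{|\sg|_H>0\}$ and renormalizing $\sg$ gives $|\sg|_H=1$ $\nu$-a.e. Uniqueness is standard: testing \Eq{measure} with $G=u\cdot l$ ($u\in\FCb^1$, $l\in E^*$) extracts the signed scalar measure $\la\sg,l\ra_H\,\nu$ for each $l$, and since $E^*$ separates $H$ the polar decompositions of any two representations must agree.

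The smoothness claim rests on a capacity bound of the form
\[
\nu(A)\le C\bigl(V(\rho)+\|\rho\|_1\bigr)^{1/2}\Cp^{|\rho|+1}(A)^{1/2}
\]
for Borel $A\subset E$. To prove it, pick $u\in\cF^{|\rho|+1}$ with $\tilde u\ge 1$ $\cE^{|\rho|+1}$-q.e.\ on an open set containing $A$, approximate $\sg$ in $L^2(E\to H;\nu)$ by $G_j\in\FCb^1(E^*)$ with $|G_j|_H\le1$, and apply \Eq{bvp} to $u\cdot G_j$. This shows $\nu$ charges no $\Cp^{|\rho|+1}$-polar set; together with $\nu(E)<\infty$ and quasi-regularity of $\cE^{|\rho|+1}$, it places $\nu\in S^{|\rho|+1}$. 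Under the extra assumption $\rho\in QR$, the local property of $\nabla^*$ gives $L(G)=0$ for any $G\in\FCb^1(E^*)$ vanishing on a neighborhood of $F^\rho$, hence $\nu|_{E\setminus F^\rho}=0$; replacing $(\cE^{|\rho|+1},\cF^{|\rho|+1})$ by $(\cE^\rho,\cF^\rho)$ on $F^\rho$ and rerunning the same capacity argument then yields $\nu|_{F^\rho}\in S^\rho$.

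The main obstacle is this smoothness step. The field $\sg$ is only Borel, not a priori quasi-continuous, so approximating it by cylindrical $G_j\in\FCb^1(E^*)$ in a manner compatible with the integration-by-parts bound \Eq{bvp} requires a careful truncation/density argument and a judicious choice of the reference form (namely $(\cE^{|\rho|+1},\cF^{|\rho|+1})$ rather than $(\cE,\cF)$). The same uniform estimate, applied to $\rho_n$, is what drives the tightness used in the passage to the limit.
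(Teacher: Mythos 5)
First, a point of reference: the paper does not prove this statement at all --- it is imported verbatim as Theorem~3.9 of \cite{FH}, so the only meaningful comparison is with that source. Your architecture (cylindrical approximation $\rho_n=P_n\rho$ with $V(\rho_n)\le V(\rho)$, finite-dimensional geometric measure theory for the Gaussian-weighted densities to produce $(\nu_n,\sg_n)$ with $\nu_n(E)\le V(\rho_n)$, tightness driven by a capacity estimate relative to the form with reference measure $(|\rho|+1)\mu$, weak passage to the limit, and uniqueness by testing against $G=u\,l$) is exactly the route of \cite{Fu,FH}, so the skeleton is the right one.

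The genuine gap is in the step you yourself flag as the main obstacle, and it is more than a technicality. First, \Eq{bvp} is stated, and is only valid, for $\rho\in BV\cap L^p$ with $p>1$; a general $\rho\in BV$ lies only in $L(\log L)^{1/2}$, so you cannot invoke \Eq{bvp} for $uG_j\in\D^{1,q}(H)$ as proposed. Second, even where \Eq{bvp} does apply, applying it to $uG_j$ yields only $\int_E\nabla^*(uG_j)\rho\,d\mu\le V(\rho)\,\|u\|_\infty$, which contains no dependence on $A$ whatsoever and hence cannot give $\nu(A)\le C\,\Cp^{|\rho|+1}(A)^{1/2}$. To obtain a capacity bound one must expand $\nabla^*(uG_j)=u\,\nabla^*G_j-\la\nabla u,G_j\ra_H$: the term $\int_E\la\nabla u,G_j\ra_H\,\rho\,d\mu$ is handled by Cauchy--Schwarz against $(|\rho|+1)\mu$ and is bounded by $\cE^{|\rho|+1}(u,u)^{1/2}(\|\rho\|_1+1)^{1/2}$, but the term $\int_E u\,(\nabla^*G_j)\rho\,d\mu$ is precisely where the $L(\log L)^{1/2}$ hypothesis and the Gaussian exponential integrability of $\nabla^*G_j$ must be played off against each other (this is the substance of the corresponding lemmas in \cite{FH}); your sketch supplies no mechanism for making this term small in terms of $\Cp^{|\rho|+1}(A)$, and without it both the smoothness claim and the tightness of $\{\nu_n\}$ are unsupported. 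A smaller defect: after the weak-compactness/Mazur step one may have $|\sg|_H<1$ on a set of positive $\nu$-measure, and the correct repair is to replace $\nu$ by $|\sg|_H\cdot\nu$ (i.e., take the total variation measure of the $H$-valued measure $\sg\cdot\nu$) before normalizing $\sg$; merely restricting $\nu$ to $\{|\sg|_H>0\}$ while rescaling $\sg$ changes the right-hand side of \Eq{measure}.
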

 
We now introduce some localized function spaces.
\begin{definition}\label{def:loc}
Let $\rho$ be a real-valued $\mu$-measurable function on $E$.
\begin{romanlist}[(ii)]
\item For $p\in[1,+\infty]$, we say $\rho\in \dot L^p_\loc$ if there exists an $\cE$-nest $\{F_k\}_{k=1}^\infty$ such that $\rho\cdot \bfone_{F_k}\in L^p$ for every $k\in\N$.
\item We say $\rho\in \BVloc$ if there exists an $\cE$-nest $\{F_k\}_{k=1}^\infty$ and $\{\rho_k\}_{k=1}^\infty\subset BV$ such that $\rho=\rho_k$ $\mu$-a.e.\ on $F_k$ for every $k\in\N$.
\end{romanlist}
\end{definition}
The following are some properties of $\BVloc$.
\begin{theorem}\label{th:1}
For $\rho\in \dot L^2_\loc$, the implications $\rom{(a)}\Leftrightarrow\rom{(b)}\Leftarrow\rom{(c)}$ hold.
\begin{alphlist}[\rom{(c)}]
\item[\rom{(a)}] $\rho\in \dot{BV}_\loc$.
\item[\rom{(b)}] There exists an $\cE$-nest $\{F_k\}_{k=1}^\infty$ and positive numbers $\{c_k\}_{k=1}^\infty$ such that for every $k\in\N$, $\rho\cdot\bfone_{F_k}\in L^2$ and
\[
  \int_E \rho\nabla^* G\,d\mu\le c_k \|G\|_\infty,\quad
  G\in \D^{1,2}(H)_{F_k,b}.
\]
\item[\rom{(c)}] There exists an $\cE$-nest $\{F_k\}_{k=1}^\infty$ and $\{f_m\}_{m=1}^\infty\subset \D^{1,1}$ such that for every $k\in \N$, $(f_m-\rho)\cdot \bfone_{F_k}$ converges to $0$ in $L^1$ as $m\to\infty$ and $\|(\nabla f_m)\cdot \bfone_{F_k}\|_1$ is bounded in $m$.
\end{alphlist}
\end{theorem}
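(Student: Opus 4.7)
The plan is to establish (a) $\Leftrightarrow$ (b) via two separate implications, and to prove (c) $\Rightarrow$ (b) directly. By intersecting nests, I may assume throughout that $\rho\bfone_{F_k}\in L^2$ (from $\rho\in \dot L^2_\loc$) on the nest appearing in each statement.

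For (a) $\Rightarrow$ (b), let $\rho_k\in BV$ agree with $\rho$ $\mu$-a.e.\ on $F_k$. The local property of $\nabla^*$ applied to $G\in \D^{1,2}(H)_{F_k,b}$ gives $\nabla^*G=0$ $\mu$-a.e.\ off $F_k$, so that $\int_E\rho\nabla^*G\,d\mu = \int_E\rho_k\nabla^*G\,d\mu$. By \Thm{BV} applied to $\rho_k$, and extending the identity there from $\FCb^1(E^*)$ to $\D^{1,2}(H)_{F_k,b}$ via $\cE$-quasi-continuous approximation (the measure $\nu_k$ does not charge $\cE$-polar sets), the right-hand side equals $\int_E\la\tilde G,\sg_k\ra_H\,d\nu_k \le V(\rho_k)\|G\|_\infty$, which gives (b) with $c_k:=V(\rho_k)$.

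Conversely, for (b) $\Rightarrow$ (a), I construct cut-offs $\chi_k\in \D^{1,2}$ with $0\le\chi_k\le1$, $\tilde\chi_k=1$ $\cE$-q.e.\ on $F_k$, and $\chi_k=0$ $\mu$-a.e.\ outside some $F_{j_k}$, by a standard argument from the density of $\bigcup_j\cF_{F_j}$ in $\cF$. Setting $\rho_k:=\rho\chi_k$, so that $|\rho_k|\le|\rho|\bfone_{F_{j_k}}\in L^2$, the product rule for the divergence gives, for $G\in \FCb^1(E^*)$ with $|G|_H\le 1$,
\[
\int_E\rho_k\nabla^*G\,d\mu = \int_E\rho\,\nabla^*(\chi_kG)\,d\mu + \int_E\rho\la\nabla\chi_k,G\ra_H\,d\mu.
\]
Applying (b) to $\chi_kG\in \D^{1,2}(H)_{F_{j_k},b}$ bounds the first term by $c_{j_k}\|G\|_\infty$; the Cauchy-Schwarz inequality, combined with $\nabla\chi_k=0$ off $F_{j_k}$ from the local property of $\nabla$, bounds the second by $\|\rho\bfone_{F_{j_k}}\|_2\|\nabla\chi_k\|_{L^2(H)}\|G\|_\infty$. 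Hence $V(\rho_k)<\infty$, so $\rho_k\in BV$ and $\rho=\rho_k$ $\mu$-a.e.\ on $F_k$.

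For (c) $\Rightarrow$ (b), fix $G\in \D^{1,2}(H)_{F_k,b}$ and truncate $f_m^{(N)}:=(-N)\vee f_m\wedge N\in \D^{1,1}\cap L^\infty$, so $|\nabla f_m^{(N)}|_H\le|\nabla f_m|_H$ a.e.\ by the chain rule. Integration by parts---valid for bounded $u\in\D^{1,1}$ against bounded $G\in\D^{1,2}(H)$ by approximating $u$ in $\D^{1,1}\cap L^\infty$---combined with $G=0$ off $F_k$ yields
\[
\int_E f_m^{(N)}\nabla^*G\,d\mu = \int_{F_k}\la\nabla f_m^{(N)}, G\ra_H\,d\mu\le C_k\|G\|_\infty,
\]
where $C_k:=\sup_m\|\nabla f_m\bfone_{F_k}\|_1<\infty$. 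Letting $N\to\infty$ (dominated convergence on the right) extends the bound to $\int_{F_k}\la\nabla f_m,G\ra_H\,d\mu$. The final step is to identify $\lim_m\int_{F_k}\la\nabla f_m,G\ra_H\,d\mu$ with $\int_E\rho\nabla^*G\,d\mu$: reducing by density to $G$ with $\nabla^*G\in L^\infty$, the chain $\int f_m^{(N)}\nabla^*G\,d\mu\to\int f_m\nabla^*G\,d\mu\to\int\rho\nabla^*G\,d\mu$ goes through by dominated convergence and $L^1$-convergence of $(f_m-\rho)\bfone_{F_k}$. The principal obstacle is precisely this last identification: the $\D^{1,1}$-convergence of $f_m$ does not pair with the $L^2$-integrability of $\nabla^*G$ in general, so the truncation and density reduction must be coordinated carefully.
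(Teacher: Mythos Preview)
Your strategy for (a)$\Leftrightarrow$(b) is essentially the paper's, but there are two gaps worth flagging.

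\textbf{In (a)$\Rightarrow$(b):} you take $\rho_k\in BV$ directly from \Defn{loc} and then try to extend the identity \Eq{measure} from $\FCb^1(E^*)$ to $G\in\D^{1,2}(H)_{F_k,b}$ ``via $\cE$-quasi-continuous approximation.'' That handles the measure side $\int_E\la\tilde G,\sg_k\ra_H\,d\nu_k$, but not the left side: approximating $G$ by $G_n\in\FCb^1(E^*)$ in $\D^{1,2}(H)$ only gives $\nabla^*G_n\to\nabla^*G$ in $L^2$, and your $\rho_k$ is merely in $L(\log L)^{1/2}$, so $\int_E\rho_k\nabla^*G_n\,d\mu$ need not converge. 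The paper fixes this by first arranging $\rho_k\in BV\cap L^2$ (\Lem{nest2}, via the cutoffs of \Lem{nest} and \Lem{bv}), after which \Eq{bvp} applies directly. This is easy to patch---indeed, it is the same cutoff construction you already use for (b)$\Rightarrow$(a).

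\textbf{In (c)$\Rightarrow$(b):} this is where the real problem lies, and you seem aware of it. You let $N\to\infty$ first, obtaining $\int_{F_k}\la\nabla f_m,G\ra_H\,d\mu\le C_k\|G\|_\infty$, and then try to pass $m\to\infty$ by ``reducing by density to $G$ with $\nabla^*G\in L^\infty$.'' But that density reduction does not go through: you would need $G_n\in\D^{1,2}(H)_{F_k,b}$ with $\nabla^*G_n\in L^\infty$, $\nabla^*G_n\to\nabla^*G$ in $L^2$, \emph{and} $\|G_n\|_\infty$ controlled, and no such approximation is available in general (elements of $\FCb^1(E^*)$ are neither supported in $F_k$ nor have bounded divergence). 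The paper avoids this entirely by reversing the order of limits: keep the truncation level $M$ fixed and send $m\to\infty$ first. Since $(f_m-\rho)\bfone_{F_k}\to0$ in $L^1$ implies $\Phi_M(f_m)\bfone_{F_k}\to\Phi_M(\rho)\bfone_{F_k}$ in $L^2$ (bounded convergence in measure on the finite-measure set $F_k$), and $\nabla^*G\in L^2$ with $\nabla^*G=0$ off $F_k$, one gets
\[
\int_E\Phi_M(\rho)\nabla^*G\,d\mu=\lim_{m\to\infty}\int_E\Phi_M(f_m)\nabla^*G\,d\mu\le c_k\|G\|_\infty.
\]
Only then does one let $M\to\infty$, using $\Phi_M(\rho)\bfone_{F_k}\to\rho\bfone_{F_k}$ in $L^2$ (here the hypothesis $\rho\in\dot L^2_\loc$ is finally used). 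No density argument on the test-function side is needed.
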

Unlike the case of finite-dimensional spaces, we must take care with the localized test functions on $E$. Indeed, except for the zero function there are no functions $f$ in $\FCb^1$ such that $f$ vanishes outside a bounded set.
This is why we introduce $\D^{1,2}(H)_{F_k,b}$ in the theorem above.

For the proof of \Thm{1}, we introduce some lemmas.
\begin{lemma}\label{lem:nest}
Let $\{F_k\}_{k=1}^\infty$ be an $\cE$-nest. 
Then, there exists another $\cE$-nest $\{F'_k\}_{k=1}^\infty$ and functions $\{\ph_k\}_{k=1}^\infty\subset \bigcup_{n=1}^\infty\D^{1,2}_{F_n}$ such that for any $k$, $F'_k\subset F_l$ for some $l\ge k$, $\ph_k$ is $\cE$-quasi-continuous, $0\le \ph_k\le1$ on $E$, $\ph_k=1$ on $F'_k$, and $\lim_{k\to\infty}\cE(\ph_k,\ph_k)=0$.
\end{lemma}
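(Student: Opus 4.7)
The plan is to build each cutoff $\ph_k$ out of the $\cE_1$-equilibrium potential of $E\setminus F_{n_k}$ for a sufficiently sparse subsequence $\{n_k\}$. Such a potential is $\cE$-quasi-continuous, equals $1$ q.e.\ on $E\setminus F_{n_k}$, and has small $\cE_1$-norm. A unit contraction that doubles the ``size'' of its complement then produces a $\ph_k$ equal to $1$ exactly on the sublevel set $\{u_k\le 1/2\}$; the latter, measurable but not closed, is upgraded via quasi-continuity and a monotonisation over tails to yield the required closed increasing nest $\{F'_k\}$.

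First, choose $n_1<n_2<\cdots$ with $\Cp(E\setminus F_{n_k})\le 4^{-k}$, so in particular $n_k\ge k$. Let $u_k:=e_{E\setminus F_{n_k}}$: then $0\le u_k\le 1$ q.e., $u_k=1$ q.e.\ on $E\setminus F_{n_k}$, and $\cE_1(u_k,u_k)=\Cp(E\setminus F_{n_k})\le 4^{-k}$. Define
\[
\ph_k:=\bigl(2(1-u_k)\wedge 1\bigr)\vee 0.
\]
Then $\ph_k\in\cF$ with $0\le\ph_k\le 1$ and is $\cE$-quasi-continuous; $\ph_k=0$ q.e.\ on $\{u_k\ge 1\}\supset E\setminus F_{n_k}$, so $\ph_k\in\D^{1,2}_{F_{n_k}}\subset\bigcup_n\D^{1,2}_{F_n}$; by the unit contraction, $\cE(\ph_k,\ph_k)\le 4\,\cE(u_k,u_k)\le 4\cdot 4^{-k}\to 0$; and $\{\ph_k=1\}=\{u_k\le 1/2\}$.

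To extract a closed subset of $\{\ph_k=1\}$ of almost full capacity, use $\cE$-quasi-continuity of $u_k$ to pick a closed $K_k\subset E$ on which $u_k$ is continuous, with $\Cp(E\setminus K_k)\le 4^{-k}$, and set $F_k^{(0)}:=\{u_k\le 1/2\}\cap K_k\cap F_{n_k}$; this is closed and contained in $F_{n_k}$. A Chebyshev-type capacity bound (using the admissible potential $2u_k\wedge 1$) gives $\Cp(\{u_k>1/2\})\le 4\cdot 4^{-k}$, hence $\Cp(E\setminus F_k^{(0)})\le 6\cdot 4^{-k}$. To enforce monotonicity, put $F'_k:=\bigcap_{j\ge k}F_j^{(0)}$: each $F'_k$ is closed, $F'_k\subset F'_{k+1}$, and $\sigma$-subadditivity of $\Cp$ yields $\Cp(E\setminus F'_k)\le\sum_{j\ge k}6\cdot 4^{-j}\le 8\cdot 4^{-k}\to 0$. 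Thus $\{F'_k\}$ is an $\cE$-nest, $F'_k\subset F_k^{(0)}\subset F_{n_k}$ (providing the required $l=n_k\ge k$), and $F'_k\subset\{u_k\le 1/2\}=\{\ph_k=1\}$.

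The hard part is to secure simultaneously that $\ph_k=1$ on a \emph{closed} set $F'_k$, that the $\{F'_k\}$ form an \emph{increasing} nest exhausting $E$ in capacity, and that $\cE(\ph_k,\ph_k)\to 0$. Fixing $\ph_k$ to $1$ on a sublevel set produces only a measurable set; closing it off via quasi-continuity destroys monotonicity; and restoring monotonicity by intersecting the tail $\{F_j^{(0)}\}_{j\ge k}$ is affordable only because the capacities were arranged to decay geometrically.
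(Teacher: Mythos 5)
Your proof is correct and follows essentially the same route as the paper's: a subsequence with geometrically summable capacities, the cutoff $1\wg 2(1-e_{E\setminus F_{n_k}})$ of the equilibrium potential, a Chebyshev-type capacity bound on $\{u_k>1/2\}$, closedness of the level set recovered from quasi-continuity on a closed set of nearly full capacity, and tail intersections to restore monotonicity of the nest. The only differences are cosmetic (e.g., $4^{-k}$ versus $2^{-k}$, and one closed set $K_k$ per $k$ versus a single auxiliary nest $\{\hat F_l\}$ on which all the potentials are continuous).
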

\begin{proof}
We denote $E\setminus F_k$ by $F_k^c$, etc.
Take an increasing sequence $\{n(k)\}_{k=1}^\infty$ of natural numbers such that $\Cp(F_{n(k)}^c)<2^{-k}$ for every $k$.
There exists an $\cE$-nest $\{\hat F_l\}_{l=1}^\infty$ such that $e_{F_{n(k)}^c}$ is continuous on each $\hat F_l$ for every $k$ (cf.\ \cite[Theorem~2.1.2]{FOT}).
For $k\in\N$, let $\ph_k=1\wg 2(1- e_{F_{n(k)}^c})$ and $\check F_k=\{\ph_k=1\}$.
Since $\check F_k^c=\{e_{F_{n(k)}^c}>1/2\}$, 
\[
\Cp(\check F_k^c)\le \cE(2e_{F_{n(k)}^c},2e_{F_{n(k)}^c})+\|2e_{F_{n(k)}^c}\|_2^2
=4\Cp(F_{n(k)}^c)<2^{-k+2}.
\]
Define $F'_k=\bigcap_{l=k}^\infty (\check F_l\cap \hat F_l\cap F_{n(l)})$.
Then, $\{F'_k\}_{k=1}^\infty$ is a nondecreasing sequence of closed sets and
\begin{align*}
\Cp((F'_k)^c)
&=\Cp\left(\left(\bigcup_{l=k}^\infty \check F_l^c\right)\cup \hat F_k^c\cup F_{n(k)}^c\right)\\
&\le \sum_{l=k}^\infty\Cp(\check F_l^c)+\Cp(\hat F_k^c)+  \Cp(F_{n(k)}^c)\\
&\le 2^{-k+3}+\Cp(\hat F_k^c)+\Cp(F_{n(k)}^c),
\end{align*}
which converges to $0$ as $k\to\infty$.
Furthermore, $\cE(\ph_k,\ph_k)\le 4\cE(e_{F_{n(k)}^c},e_{F_{n(k)}^c})<2^{-k+2}\to0$ as $k\to\infty$.
\end{proof}
\begin{lemma}\label{lem:bv}
Let $p\in(1,+\infty]$ and $q$ denote the conjugate exponent of $p$.\begin{romanlist}[\rom{(ii)}]
\item[\rom{(i)}] 
Let $f\in BV\cap L^p$ and $g\in \D^{1,q}_b$. Then, $fg\in BV$ and $V(fg)\le V(f)\|g\|_\infty+\|f\nabla g\|_1$.
\item[\rom{(ii)}] Let $f\in BV$ and $g\in \D^{1,q}_{F,b}$ for some $\mu$-measurable set $F\subset E$.
If $f\cdot\bfone_F\in L^p$, then $fg\in BV\cap L^p$ and $V(fg)\le V(f)\|g\|_\infty+\|f\nabla g\|_1$.
\end{romanlist}
\end{lemma}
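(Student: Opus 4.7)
My plan is to prove (i) directly via a divergence product identity and then to reduce (ii) to (i) by truncating $f$ and passing to the limit.

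For (i), the algebraic input is the identity
$$\nabla^*(gG)=g\,\nabla^* G-\la\nabla g,G\ra_H,$$
which holds by direct computation for $g\in\FCb^1$ and extends to $g\in\D^{1,q}_b$ by approximating $g$ in $\D^{1,q}$ with uniform $L^\infty$-control (via a smooth truncation in $\R$). For any $G\in\FCb^1(E^*)$ with $|G|_H\le 1$ it rewrites the test integral as
$$\int_E fg\,\nabla^* G\,d\mu=\int_E f\,\nabla^*(gG)\,d\mu+\int_E f\la\nabla g,G\ra_H\,d\mu.$$
Since $gG\in\D^{1,q}(H)$ and $\|gG\|_\infty\le\|g\|_\infty$, inequality \Eq{bvp} (applicable because $f\in BV\cap L^p$) bounds the first summand by $V(f)\|g\|_\infty$, while H\"older bounds the second by $\|f\nabla g\|_1$. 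Taking the supremum over admissible $G$ yields $V(fg)\le V(f)\|g\|_\infty+\|f\nabla g\|_1$, and $fg\in L(\log L)^{1/2}$ follows from $|fg|\le\|g\|_\infty|f|$ together with the subadditivity $(a+b)_+^{1/2}\le a_+^{1/2}+b_+^{1/2}$ applied to the logarithmic factor.

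For (ii), set $f_n:=(-n)\vee f\wg n\in BV\cap L^\infty\subset BV\cap L^p$ and invoke the contraction property $V(f_n)\le V(f)$, a standard fact in the BV theory on Wiener spaces (cf.~\cite{FH}). Applying (i) to $f_n$ and $g$ gives the uniform estimate
$$V(f_n g)\le V(f_n)\|g\|_\infty+\|f_n\nabla g\|_1\le V(f)\|g\|_\infty+\|f\nabla g\|_1,$$
where $\|f\nabla g\|_1=\|f\bfone_F\nabla g\|_1<\infty$ since the local property of $\nabla$ applied to $g\in\D^{1,q}_F$ gives $\nabla g=0$ $\mu$-a.e.\ on $E\setminus F$ and H\"older then applies with $f\bfone_F\in L^p$ and $\nabla g\in L^q$. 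The pointwise bound $|fg|\le\|g\|_\infty|f\bfone_F|$ shows $fg\in L^p$, and dominated convergence yields $f_n g\to fg$ in $L^s$ for every finite $s\le p$ (taking $s=p$ when $p<\infty$ and, say, $s=2$ when $p=\infty$). Because $\nabla^* G$ lies in $L^r$ for every $r<\infty$ whenever $G\in\FCb^1(E^*)$, H\"older gives
$$\int_E fg\,\nabla^* G\,d\mu=\lim_{n\to\infty}\int_E f_n g\,\nabla^* G\,d\mu\le V(f)\|g\|_\infty+\|f\nabla g\|_1,$$
and the supremum over admissible $G$ finishes the proof.

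The principal non-routine ingredient is the contraction inequality $V(f_n)\le V(f)$, which I expect to be the main obstacle. Once this standard BV fact is granted, everything else is the product-rule manipulation in (i) plus a dominated-convergence passage to the limit.
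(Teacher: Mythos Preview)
Your argument for (ii) is the same as the paper's: truncate $f$ to $f_n=(-n)\vee f\wedge n$, invoke the contraction $V(f_n)\le V(f)$ from \cite[Corollary~3.8]{FH}, apply part~(i), and pass to the limit (the paper phrases the last step as $L^1$-convergence plus lower semicontinuity of $V$, which is what your test-function argument unwinds).

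For (i), however, the two proofs differ. The paper regularises $f$ via the Ornstein--Uhlenbeck semigroup: $T_tf\in\D^{1,p}$, so $(T_tf)g\in\D^{1,1}$, and the product rule together with the gradient bound $\|\nabla T_tf\|_1\le e^{-t}V(f)$ (\cite[Proposition~3.6]{FH}) gives $\|\nabla((T_tf)g)\|_1\le e^{-t}V(f)\|g\|_\infty+\|(T_tf)\nabla g\|_1$; letting $t\to0$ and using the lower-semicontinuity theorem \cite[Theorem~3.7]{FH} yields the conclusion. Your route is more direct: you stay on the divergence side, using $\nabla^*(gG)=g\nabla^*G-\la\nabla g,G\ra_H$ and the extended estimate~\Eq{bvp} for test fields in $\D^{1,q}(H)$, so no semigroup or limiting step is needed in part~(i). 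The trade-off is that your proof leans on~\Eq{bvp}, which the paper merely states, whereas the paper's argument rests on semigroup estimates that are proved in~\cite{FH}. Both approaches are valid; yours is shorter once~\Eq{bvp} is granted, while the paper's is self-contained modulo the cited semigroup facts.
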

\begin{proof}
(i) Let $\{T_t\}_{t>0}$ be the Ornstein--Uhlenbeck semigroup on $E$.
For $t>0$, $T_t f\in \D^{1,p}$, thus $(T_t f)g\in\D^{1,1}$ and
\begin{align*}
  \|\nabla((T_tf)g)\|_1
  &\le \|\nabla (T_t f)\|_1\|g\|_\infty+\|(T_t f)\nabla g\|_1\\
  &\le e^{-t}V(f)\|g\|_\infty+\|(T_t f)\nabla g\|_1,
\end{align*}
from, e.g.,~\cite[Proposition~3.6]{FH}.
Since $\lim_{t\to0}(T_tf)g= fg$ in $L^1$ and $\lim_{t\to0}(T_t f)\nabla g=f\nabla g$ in $L^1(H)$, Theorem~3.7 of \cite{FH} completes the proof.

(ii) For $n\in \N$, let $f_n=(-n)\vee (f\wg n)$. 
From \cite[Corollary~3.8]{FH}, $f_n\in BV$ and $V(f_n)\le V(f)$.
By the assumption and (i), $f_n g\in BV\cap L^p$,
$\|f_n g\|_p\le \|f g\|_p$, and 
\[
V(f_n g)\le V(f_n)\|g\|_\infty+\|f_n\nabla g\|_1\le V(f)\|g\|_\infty+\|f \nabla g\|_1,
\]
which is bounded for each $n$.
Since $\lim_{n\to\infty}f_n g= f g$ in $L^1$, the claim follows.
\end{proof}
\begin{lemma}\label{lem:nest2}
For $\rho\in \BVloc\cap\dot L^2$, there exists an $\cE$-nest $\{F_k\}_{k=1}^\infty$ and functions $\{\rho_k\}_{k=1}^\infty$ in $BV\cap L^2$ such that $\rho=\rho_k$ on $F_k$ for every $k\in\N$.
\end{lemma}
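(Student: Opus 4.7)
The plan is to combine the two defining $\cE$-nests so that on a common subnest $\rho$ has both a $BV$ representative and local $L^2$ integrability, and then use the cutoffs from \Lem{nest} to promote those $BV$ representatives to $BV\cap L^2$ functions without losing the equality with $\rho$ on a slightly smaller nest.

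First, I would unpack both hypotheses. \Defn{loc}(ii) gives an $\cE$-nest $\{F_k^{(1)}\}$ and functions $\{\rho_k^{(1)}\}\subset BV$ with $\rho=\rho_k^{(1)}$ $\mu$-a.e.\ on $F_k^{(1)}$; the hypothesis $\rho\in\dot L^2_\loc$ gives another $\cE$-nest $\{F_k^{(2)}\}$ with $\rho\cdot\bfone_{F_k^{(2)}}\in L^2$. Since the intersection of two $\cE$-nests is again an $\cE$-nest (by subadditivity of $\Cp$), setting $\tilde F_k:=F_k^{(1)}\cap F_k^{(2)}$ produces a single $\cE$-nest on which both $\rho=\rho_k^{(1)}$ $\mu$-a.e.\ and $\rho\cdot\bfone_{\tilde F_k}\in L^2$ hold simultaneously.

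Next, I would apply \Lem{nest} to $\{\tilde F_k\}$ to obtain an $\cE$-nest $\{F_k\}$ (the one to be used in the conclusion) together with quasi-continuous cutoffs $\ph_k\in\D^{1,2}_{\tilde F_{m(k)},b}$ for some $m(k)\ge k$, satisfying $0\le\ph_k\le 1$, $\ph_k=1$ on $F_k$, and $F_k\subset\tilde F_{m(k)}$. I would then set $\rho_k:=\rho_{m(k)}^{(1)}\ph_k$. Because $\rho_{m(k)}^{(1)}\cdot\bfone_{\tilde F_{m(k)}}=\rho\cdot\bfone_{\tilde F_{m(k)}}\in L^2$, \Lem{bv}(ii) with $p=q=2$ yields $\rho_k\in BV\cap L^2$; and on $F_k$ we have $\ph_k=1$ and $\rho_{m(k)}^{(1)}=\rho$ $\mu$-a.e., hence $\rho=\rho_k$ $\mu$-a.e.\ on $F_k$, as required.

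The only point needing care is the $BV\cap L^2$ membership of the product: although $\rho_{m(k)}^{(1)}$ need not be globally $L^2$, the support of $\ph_k$ lies inside $\tilde F_{m(k)}$, where $\rho_{m(k)}^{(1)}$ coincides with the $L^2$ function $\rho\cdot\bfone_{\tilde F_{m(k)}}$; this is precisely the situation that \Lem{bv}(ii) is designed to handle. Everything else is routine bookkeeping with nests.
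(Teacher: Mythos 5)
Your proof is correct and follows essentially the same route as the paper: intersect the two defining nests, take the cutoffs $\ph_k$ from \Lem{nest}, and set $\rho_k=\rho\ph_k$ (which, as you note, equals $\rho^{(1)}_{m(k)}\ph_k$ since $\ph_k$ is supported in $\tilde F_{m(k)}$), concluding via \Lem{bv}(ii). Your write-up just makes explicit the nest-intersection and the identification of the correct $BV$ representative, which the paper leaves implicit.
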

\begin{proof}
There exist an $\cE$-nest $\{F_k\}_{k=1}^\infty$ and $\{\rho_k\}_{k=1}^\infty\subset BV$ such that for every $k$, $\rho\cdot\bfone_{F_k}\in L^2$ and $\rho=\rho_k$ on $F_k$.
Take the $\cE$-nest $\{F'_k\}_{k=1}^\infty$ and the functions $\{\ph_k\}_{k=1}^\infty$ in \Lem{nest}.
From \Lem{bv}, the assertion is true by taking $\{F'_k\}_{k=1}^\infty$ and $\{\rho \ph_k\}_{k=1}^\infty$ as $\{F_k\}_{k=1}^\infty$ and $\{\rho_k\}_{k=1}^\infty$, respectively.
\end{proof}

\begin{proof}[Proof of \Thm{1}]
(a)$\Rightarrow$(b): 
Take the $\cE$-nest $\{F_k\}_{k=1}^\infty$ and $\{\rho_k\}_{k=1}^\infty\subset BV\cap L^2$ as in \Lem{nest2}.
Let $k\in\N$ and take $G\in \D^{1,2}(H)_{F_k,b}$.
Since $\nabla^* G=0$ $\mu$-a.e.\ on $E\setminus F_k$,
\[
\int_E \rho\nabla^* G\,d\mu
=\int_E \rho_k \nabla^* G\,d\mu
\le V(\rho_k)\|G\|_\infty,
\]
where the last inequality follows from \Eq{bvp}.
Thus, (b) holds.

(b)$\Rightarrow$(a):
Take the $\cE$-nest $\{F'_k\}_{k=1}^\infty$ and the functions $\{\ph_k\}_{k=1}^\infty$ as in \Lem{nest}.
For $k\in\N$, take $l\in\N$ such that $\ph_k\in\D^{1,2}_{F_l}$ and let $\rho_k=\rho\ph_k$. Then, $\rho_k\in L^2 $ and $\rho_k=\rho$ on $F'_k$.
Moreover, for any $G\in \FCb^1(E^*)$,
\begin{align*}
\int_E \rho_k\nabla^* G\,d\mu
&=\int_E \rho\ph_k\nabla^* G\,d\mu\\
&=\int_E \rho\{\nabla^*(\ph_k G)+\la \nabla \ph_k,G\ra_H\}\,d\mu\\
&\le c_l\|\ph_k G\|_\infty +\|\rho\cdot \bfone_{F_l}\|_2\|\nabla \ph_k\|_2\|G\|_\infty\\
&\le (c_l+\|\rho\cdot \bfone_{F_l}\|_2\|\nabla \ph_k\|_2)\|G\|_\infty.
\end{align*}
Therefore, $\rho_k\in BV$.

(c)$\Rightarrow$(b):
We may assume that $\rho\cdot\bfone_{F_k}\in L^2$ for every $k$.
For $M>0$, define $\Phi_M(t)=(-M)\vee(t\wg M)$ for $t\in\R$.
Then, for $k\in \N$ and $G\in \D^{1,2}(H)_{F_k,b}$,
\begin{align*}
\int_E \Phi_M(f_m)\nabla^*G\,d\mu
&=\int_E \la \nabla(\Phi_M(f_m)),G\ra_H\,d\mu\\
&\le \|(\nabla f_m)\cdot \bfone_{F_k}\|_1\|G\|_\infty
\le c_k\|G\|_\infty,
\end{align*}
where $c_k:=\sup_{m\in\N}\|(\nabla f_m)\cdot \bfone_{F_k}\|_1<\infty$.
Since $\nabla^*G\in L^2$, 
\[
\lim_{m\to\infty}\int_E \Phi_M(f_m)\nabla^*G\,d\mu=\int_E \Phi_M(\rho)\nabla^*G\,d\mu.
\]
Since $\lim_{M\to\infty}(\Phi_M(\rho)-\rho)\cdot \bfone_{F_k}=0$ in $L^2$, \[
\lim_{M\to\infty}\int_E \Phi_M(\rho)\nabla^*G\,d\mu=\int_E \rho\nabla^*G\,d\mu.
\]
Therefore, we have $\int_E \rho\nabla^*G\,d\mu\le c_k\|G\|_\infty$.
\end{proof}
The following are some basic lemmas that are used later.
\begin{lemma}\label{lem:easy}
The following claims hold.
\begin{romanlist}[\rom{(ii)}]
\item[\rom{(i)}] For $\rho\in L^\infty$, $\cF^{|\rho|+1}=\D^{1,2}$ and their norms are equivalent. 
\item[\rom{(ii)}]Let $\rho\in QR\cap L^\infty$ and $\{F_k\}_{k=1}^\infty$ be an $\cE$-nest. Then, $\D^{1,2}|_{F^\rho}$ is continuously embedded in $\cF^{\rho}$ and there exists $c>0$ such that $\Cp^\rho(A)\le c\Cp(A)$ for any $A\subset F^\rho$.
In particular, $\{F_k\cap F^\rho\}_{k=1}^\infty$ is an $\cE^\rho$-nest.
Moreover, $\bigcup_{k=1}^\infty \D^{1,2}_{F_k,b}|_{F^\rho}$ is dense in $\cF^\rho$.
\end{romanlist}
\end{lemma}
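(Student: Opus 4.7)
The plan is to exploit the $L^\infty$-bound on $\rho$ (respectively on $|\rho|+1$) to make every weighted quantity comparable to the corresponding unweighted one on $(\cE,\D^{1,2})$, combined with the cutoff machinery of \Lem{nest}.

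For (i), the pointwise bound $1\le|\rho|+1\le\|\rho\|_\infty+1$ on $\FCb^1$ yields
\[
  \cE_1(f,f)\le\cE^{|\rho|+1}_1(f,f)\le(\|\rho\|_\infty+1)\,\cE_1(f,f),\qquad f\in\FCb^1,
\]
where $\cE_1:=\cE+\|\cdot\|_2^2$ and analogously for $\cE^{|\rho|+1}_1$. A Cauchy sequence $\{f_n\}\subset\FCb^1$ in $\cE^{|\rho|+1}_1$-norm that vanishes in $L^2((|\rho|+1)\mu)$ also vanishes in $L^2(\mu)$ and is Cauchy in $\cE$, so the closability of $\cE$ on $L^2(\mu)$ forces $\nabla f_n\to 0$ in $L^2(E\to H)$. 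Hence $|\rho|+1\in QR$, and the two completions of $\FCb^1$ coincide as sets with equivalent norms; since $F^{|\rho|+1}=E$, this gives $\cF^{|\rho|+1}=\D^{1,2}$.

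For the continuous embedding and capacity bound in (ii): for $f\in\FCb^1$, the pointwise bound $0\le\rho\le\|\rho\|_\infty$ gives $\cE^\rho_1(f|_{F^\rho},f|_{F^\rho})\le\|\rho\|_\infty\,\cE_1(f,f)$, so the restriction map extends by density to a bounded operator $\D^{1,2}\to\cF^\rho$ of norm at most $\|\rho\|_\infty^{1/2}$. For $A\subset F^\rho$ and any $f\in\D^{1,2}$ with $f\ge 1$ $\mu$-a.e.\ on an open set $O\supset A$, the restriction $f|_{F^\rho}$ lies in $\cF^\rho$ and is $\ge 1$ $\rho\mu$-a.e.\ on the relatively open set $O\cap F^\rho\supset A$; hence $\Cp^\rho(A)\le\|\rho\|_\infty\,\cE_1(f,f)$, and taking the infimum over such $f$ yields $\Cp^\rho(A)\le\|\rho\|_\infty\,\Cp(A)$, from which the $\cE^\rho$-nest property of $\{F_k\cap F^\rho\}$ is immediate.

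For the density claim, I use that $\cF^\rho$ is by definition the closure of $\FCb^1|_{F^\rho}$, so it suffices to approximate each $f\in\FCb^1$ in $\cF^\rho$-norm by elements of $\bigcup_k\D^{1,2}_{F_k,b}|_{F^\rho}$. Apply \Lem{nest} to $\{F_k\}$ to obtain cutoffs $\ph_k\in\bigcup_n\D^{1,2}_{F_n}$ with $0\le\ph_k\le 1$, $\ph_k=1$ on an increasing $\cE$-nest $\{F'_k\}$, and $\cE(\ph_k,\ph_k)\to 0$; then $f\ph_k\in\D^{1,2}_{F_n,b}$ for the relevant $n$, and by the continuous embedding already established it suffices to show $f\ph_k\to f$ in $\D^{1,2}$. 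The only step warranting care is the pointwise convergence $\ph_k\to 1$: because every admissible test function in the infimum defining $\Cp(A)$ satisfies $\cE_1(f,f)\ge\|f\|_2^2\ge\mu(A)$, one has $\mu\le\Cp$ on Borel sets, whence $\mu(E\setminus F'_k)\le\Cp(E\setminus F'_k)\to 0$ and $\ph_k\to 1$ $\mu$-a.e.\ on $\bigcup_k F'_k$, a set of full $\mu$-measure. Dominated convergence then gives $f\ph_k\to f$ in $L^2$, while $\nabla(f(1-\ph_k))=(1-\ph_k)\nabla f-f\nabla\ph_k$ tends to $0$ in $L^2(E\to H)$ by dominated convergence on the first term and by $\|f\|_\infty\,\|\nabla\ph_k\|_2\to 0$ on the second.
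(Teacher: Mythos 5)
Your proof is correct and follows essentially the same route as the paper: the paper proves only the final density claim, by combining the continuous embedding $\D^{1,2}|_{F^\rho}\hookrightarrow\cF^\rho$ with the density of $\bigcup_k\D^{1,2}_{F_k,b}$ in $\D^{1,2}$, which you re-derive explicitly via the cutoff functions of \Lem{nest} rather than citing it. Your additional details for (i) and for the embedding and capacity comparison in (ii) (via the pointwise bounds $1\le|\rho|+1\le\|\rho\|_\infty+1$ and $0\le\rho\le\|\rho\|_\infty$) are the routine arguments the paper leaves implicit, and they are sound.
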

\begin{proof}
We prove only the last claim of (ii).
It suffices to prove that for any function $f\in\FCb^1$, $f|_{F^\rho}$ can be approximated by elements of $\bigcup_{k=1}^\infty \D^{1,2}_{F_k,b}|_{F^\rho}$ in $\cF^\rho$. Since $\bigcup_{k=1}^\infty \D^{1,2}_{F_k,b}$ is dense in $\D^{1,2}$, there exists $\{f_n\}_{n=1}^\infty$ in $\bigcup_{k=1}^\infty \D^{1,2}_{F_k,b}$ such that $f_n$ converges to $f$ in $\D^{1,2}$.
Then, $f_n|_{F^\rho}$ converges to $f|_{F^\rho}$ in $\cF^\rho$.
\end{proof}
\begin{lemma}\label{lem:uniqueness}
Let $\{F_k\}_{k=1}^\infty$ be an $\cE$-nest.
Let $\nu$ and $\nu'$ be $\cE$-smooth measures on $E$ such that $\nu(F_k)<\infty$ and $\nu'(F_k)<\infty$ for every $k\in\N$.
Let $\sg$ and $\sg'$ be $H$-valued Borel functions on $E$ such that $|\sg|_H=1$ $\nu$-a.e.\ and $|\sg'|_H=1$ $\nu'$-a.e.
If $\int_E \la \tilde G,\sg\ra_H\,d\nu=\int_E \la \tilde G,\sg'\ra_H\,d\nu'$ for every $G\in \bigcup_{k=1}^\infty \D^{1,2}(H)_{F_k,b}$, then $\nu=\nu'$ and $\sg=\sg'$ $\nu$-a.e.
\end{lemma}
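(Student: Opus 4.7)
The plan is to reduce the statement to showing that, localized to each $F_k$, the $H$-valued Borel measures $\sg\cdot\nu$ and $\sg'\cdot\nu'$ coincide. Since $|\sg|_H=1$ $\nu$-a.e.\ and $|\sg'|_H=1$ $\nu'$-a.e., the $H$-variations of these vector-valued measures are precisely $\nu$ and $\nu'$, so the equality $\sg\cdot\nu=\sg'\cdot\nu'$ simultaneously yields $\nu=\nu'$ and, via Radon--Nikodym uniqueness, $\sg=\sg'$ $\nu$-a.e. The $\cE$-nest structure together with the smoothness hypothesis on $\nu,\nu'$ then propagates this identity from each $F_k$ to all of $E$.

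To implement this, I would first apply \Lem{nest} to $\{F_k\}$ to obtain a refined $\cE$-nest $\{F'_k\}$ and $\cE$-quasi-continuous cutoffs $\ph_k$ satisfying $0\le\ph_k\le 1$, $\ph_k=1$ on $F'_k$, and $\ph_k\in\D^{1,2}_{F_{l(k)}}$ for some $l(k)\in\N$. For $f\in\FCb^1$ and $l\in E^*$, the product $f\ph_k$ lies in $\D^{1,2}_{F_{l(k)},b}$ by the product rule, so $G:=(f\ph_k)\cdot l$ is a legitimate test function in $\D^{1,2}(H)_{F_{l(k)},b}$; since both $f$ and $\ph_k$ are $\cE$-quasi-continuous, $\tilde G=(f\ph_k)\cdot l$, and the hypothesis of the lemma reduces to
\[
\int_E f\,\ph_k\,\la l,\sg\ra_H\,d\nu
=\int_E f\,\ph_k\,\la l,\sg'\ra_H\,d\nu'.
\]
Both integrands are bounded by $|l|_H$, and the measures $\nu|_{F_{l(k)}}$, $\nu'|_{F_{l(k)}}$ are finite, so the two sides represent integrals of $f$ against finite signed Borel measures. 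Since $\FCb^1$ is a multiplicative family of bounded Borel functions containing the constants and separating points of the Polish space $E$, the functional monotone class theorem shows it is measure-determining, so $\ph_k\la l,\sg\ra_H\,\nu=\ph_k\la l,\sg'\ra_H\,\nu'$ as signed Borel measures for every admissible $l,k$.

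Next I would pass to the limit $k\to\infty$. Because $\{F'_k\}$ is an $\cE$-nest and smooth measures do not charge sets of zero capacity, $\ph_k\to 1$ pointwise $\nu$- and $\nu'$-a.e. Fixing $m\in\N$ and a Borel set $A\subset F_m$, dominated convergence with the integrable majorant $|l|_H\bfone_A$ yields
\[
\int_A\la l,\sg\ra_H\,d\nu=\int_A\la l,\sg'\ra_H\,d\nu'
\quad\text{for every }l\in E^*.
\]
Density of $E^*$ in $H$ upgrades this to the equality of the $H$-valued finite measures $\bfone_{F_m}\sg\cdot\nu$ and $\bfone_{F_m}\sg'\cdot\nu'$ on Borel subsets of $F_m$. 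Taking $H$-variations on both sides and using $|\sg|_H=|\sg'|_H=1$ gives $\nu|_{F_m}=\nu'|_{F_m}$; letting $m\to\infty$ and invoking smoothness then yields $\nu=\nu'$ on $E$, whence the surviving identity $\sg\,d\nu=\sg'\,d\nu$ forces $\sg=\sg'$ $\nu$-a.e.\ by Radon--Nikodym uniqueness.

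The step I expect to require the most care is the joint use of the support truncation $\ph_k$ (needed to stay inside the permitted test class $\bigcup_k\D^{1,2}(H)_{F_k,b}$) and the measure-determining property of $\FCb^1$, together with the verification that the quasi-continuous representative of $(f\ph_k)\cdot l$ is indeed $(f\ph_k)\cdot l$ itself; once these points and the relevant finiteness bounds are in place, the subsequent limits and the total-variation argument are routine applications of dominated convergence and standard vector-measure facts.
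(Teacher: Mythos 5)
Your argument is correct, but it is organized differently from the paper's. The paper sets $\xi=\nu+\nu'$ and $\gm=\sg\frac{d\nu}{d\xi}-\sg'\frac{d\nu'}{d\xi}$, so the hypothesis becomes $\int_E\la\tilde G,\gm\ra_H\,d\xi=0$ for all admissible $G$; it then takes a uniformly bounded sequence $\{G_n\}$ in the test class with $\la\tilde G_n,\gm\ra_H\to|\gm|_H$ $\xi$-a.e., which forces $\gm=0$ $\xi$-a.e., and the conclusion follows by comparing Radon--Nikodym derivatives. You instead keep $\nu$ and $\nu'$ separate, test against the concrete functions $(f\ph_k)\cdot l$, invoke the functional monotone class theorem to see that $\FCb^1$ determines finite signed Borel measures on $E$, and finish by identifying the total variation of an $H$-valued measure with Bochner density. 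Both are duality arguments over the same localized test class; what yours buys is that every approximation is explicit (the paper's sequence $\{G_n\}$, which must approximate $\sg$-type directions cut off by the $\ph_k$, is left unconstructed), at the cost of importing the monotone-class and vector-measure machinery. Two small points you should make explicit: first, that $\ph_k$, being $\cE$-quasi-continuous and vanishing $\mu$-a.e.\ on the open set $E\setminus F_{l(k)}$, vanishes q.e.\ there and hence $\nu$- and $\nu'$-a.e., so that $\ph_k\la l,\sg\ra_H\cdot\nu$ and $\ph_k\la l,\sg'\ra_H\cdot\nu'$ really are finite signed measures on all of $E$; second, that $\ph_k\to1$ $\nu$- and $\nu'$-a.e.\ because these smooth measures do not charge $E\setminus\bigcup_k F'_k$, which has zero capacity. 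Neither is a gap, and the remaining steps (dominated convergence, density of $E^*$ in $H$, uniqueness of Bochner densities) are sound.
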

\begin{proof}
 Let $\xi=\nu+\nu'$ and $\gm=\sg\frac{d\nu}{d\xi}-\sg'\frac{d\nu'}{d\xi}$.
 Then, $\int_E \la \tilde G,\gm\ra_H\,d\xi=0$ for every $G\in\bigcup_{k=1}^\infty \D^{1,2}_{F_k,b}$.
Taking a uniformly bounded sequence $\{G_n\}_{n=1}^\infty$ from $\bigcup_{k=1}^\infty \D^{1,2}_{F_k,b}$ such that $\la \tilde G_n,\gm\ra_H\to|\gm|_H$ $\xi$-a.e.\ as $n\to\infty$, we obtain $\gm=0$ $\xi$-a.e.
Therefore, $|\sg|_H\frac{d\nu}{d\xi}=|\sg'|_H\frac{d\nu'}{d\xi}$ $\xi$-a.e.
 Since $|\sg|_H=1$ $\nu$-a.e., $|\sg|_H\frac{d\nu}{d\xi}=\frac{d\nu}{d\xi}$ $\xi$-a.e.
  Similarly, $|\sg'|_H\frac{d\nu'}{d\xi}=\frac{d\nu'}{d\xi}$ $\xi$-a.e.
   Then, $\frac{d\nu}{d\xi}=\frac{d\nu'}{d\xi}$ $\xi$-a.e., which implies $\nu=\nu'$.
  The identity $\sg=\sg'$ $\nu$-a.e.\ follows from $\gm=0$ $\xi$-a.e.\ and $\nu=\nu'$.
\end{proof}
\begin{theorem}\label{th:2}
For $\rho\in \BVloc\cap \dot L^2$, there exist an $H$-valued Borel function $\sg$ on $E$, an $\cE$-smooth measure $\nu$ on $E$, and an $\cE$-nest $\{F_k\}_{k=1}^\infty$ such that $\rho\cdot\bfone_{F_k}\in L^2$ and $\nu(F_k)<\infty$ for every $k\in\N$, $|\sg|_H=1$ $\nu$-a.e., and
\begin{equation}\label{eq:ibp}
  \int_{E} \rho \nabla^* G\,d\mu
  =\int_E \la \tilde G,\sg\ra_H\,d\nu
  \quad\mbox{for every }G\in\bigcup_{k=1}^\infty \D^{1,2}(H)_{F_k,b},
\end{equation}
where $\tilde G$ denotes an $\cE$-quasi-continuous modification of $G$.
The pair $\nu$ and $\sg$ is uniquely determined in the following sense: if another pair $\nu'$ and $\sg'$ with some $\cE$-nest satisfies the conditions above, then $\nu=\nu'$ and $\sg=\sg'$ $\nu$-a.e.

If $\rho\in QR\cap L^\infty$ in addition, then $\nu|_{E\setminus F^\rho}=0$ and $\nu|_{F^\rho}\in S^\rho$.
\end{theorem}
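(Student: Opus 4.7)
The plan is to patch together local BV representations of the truncations from \Lem{nest2}, using the local property of $\nabla^*$ for compatibility and \Lem{uniqueness} for both patching and the final uniqueness statement.

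First, invoke \Lem{nest2} to obtain an $\cE$-nest $\{F_k\}$ and $\{\rho_k\}\subset BV\cap L^2$ with $\rho=\rho_k$ $\mu$-a.e.\ on $F_k$. Applying \Thm{BV} to each $\rho_k$ produces a finite positive measure $\nu_k\in S^{|\rho_k|+1}$ and an $H$-valued Borel field $\sg_k$ with $|\sg_k|_H=1$ $\nu_k$-a.e.\ satisfying \Eq{measure}. A density argument using the density of $\FCb^1(E^*)$ in $\D^{1,2}(H)$ together with $\cE$-quasi-uniform convergence of quasi-continuous modifications along subsequences extends \Eq{measure} from $G\in\FCb^1(E^*)$ to $G\in\bigcup_l\D^{1,2}(H)_{F_l,b}$, with $\tilde G$ replacing $G$ on the right-hand side; the $L^\infty$ bound on $G$ makes dominated convergence against the finite measure $\nu_k$ available.

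The heart of the argument is the consistency step. For $j\le k$ and $G\in\D^{1,2}(H)_{F_j,b}$, the local property of $\nabla^*$ gives $\nabla^*G=0$ $\mu$-a.e.\ on $E\setminus F_j$, so that
\[
\int_E(\nabla^*G)\rho_k\,d\mu
=\int_{F_j}(\nabla^*G)\rho\,d\mu
=\int_E(\nabla^*G)\rho_j\,d\mu;
\]
combining this with the extended identity yields
\[
\int_E\la\tilde G,\sg_k\ra_H\,d\nu_k
=\int_E\la\tilde G,\sg_j\ra_H\,d\nu_j
\quad\text{for every }G\in\D^{1,2}(H)_{F_j,b}.
\]
Adapting the argument of \Lem{uniqueness} to test functions supported in $F_j$ identifies the $H$-valued measures $\sg_k\,d\nu_k$ and $\sg_j\,d\nu_j$ on $F_j$, permitting the unambiguous definition $\nu|_{F_k}:=\nu_k|_{F_k}$ and $\sg|_{F_k}:=\sg_k|_{F_k}$. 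The patched $\nu$ is finite on each $F_k$; its $\cE$-smoothness follows from the IBP identity together with the involvement of $\cE$-quasi-continuous representatives on the right-hand side of \Eq{ibp}. The identity \Eq{ibp} itself is then immediate from the extended formula together with $\rho=\rho_k$ on $F_k$.

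Uniqueness is a direct application of \Lem{uniqueness} to the common refinement $\{F_k\cap F'_k\}$ of any two representing nests. For the final assertion, when $\rho\in QR\cap L^\infty$ I can arrange each $\rho_k$ bounded, and \Thm{BV} then additionally supplies $\nu_k|_{E\setminus F^{\rho_k}}=0$ and $\nu_k|_{F^{\rho_k}}\in S^{\rho_k}$; these transfer to $\nu$ through the patching via the capacity comparison $\Cp^\rho\le c\,\Cp$ and the density of $\bigcup_k\D^{1,2}_{F_k,b}|_{F^\rho}$ in $\cF^\rho$ furnished by \Lem{easy}(ii). The main obstacle I expect is the consistency step: upgrading functional agreement on $\D^{1,2}(H)_{F_j,b}$ to a pointwise-almost-everywhere identification of the $H$-valued measures $\sg_k\,d\nu_k$ on $F_j$ requires a careful localization of \Lem{uniqueness}, and the supporting extension of \Eq{measure} to $\D^{1,2}(H)_{F_l,b}$ has to be handled with the $L^\infty$ bound on $G$ playing a nontrivial role.
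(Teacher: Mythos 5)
Your overall strategy matches the paper's, but the consistency step as you propose it has a genuine gap, and it is exactly the one you flag at the end. From the agreement of the functionals
$\int_E\la\tilde G,\sg_k\ra_H\,d\nu_k=\int_E\la\tilde G,\sg_j\ra_H\,d\nu_j$ for $G\in\D^{1,2}(H)_{F_j,b}$ you cannot conclude that $\sg_k\,d\nu_k=\sg_j\,d\nu_j$ on $F_j$, because every such $G$ vanishes $\mu$-a.e.\ off $F_j$ and hence $\tilde G$ vanishes $\cE$-q.e.\ there; quasi-continuity then forces $\tilde G$ to be small near the relative boundary of $F_j$, so mass of $\nu_k$ or $\nu_j$ concentrated there is invisible to this test class. (A one-dimensional caricature: $H^1$-functions vanishing a.e.\ outside $[0,1]$ vanish at $0$ and $1$, so they cannot distinguish $\delta_0$ from the zero measure on $[0,1]$.) Since the measures produced by \Thm{BV} are precisely surface-type measures that typically live on such boundary sets, the localized version of \Lem{uniqueness} you need is false in general, and the definition $\nu|_{F_k}:=\nu_k|_{F_k}$ is not licensed. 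The paper's fix is to pass to the refined nest $\{F'_k\}$ and cutoffs $\ph_k\in\D^{1,2}_{F_{n(k)}}$ with $\ph_k=1$ on $F'_k$ from \Lem{nest}: testing against $\ph_k G$ for \emph{arbitrary} $G\in\D^{1,2}(H)_b$ gives $\int_E\ph_k\la\tilde G,\sg_{n(k)}\ra_H\,d\nu_{n(k)}=\int_E\ph_k\la\tilde G,\sg_{n(l)}\ra_H\,d\nu_{n(l)}$, so \Lem{uniqueness} applies verbatim to the weighted measures $\ph_k\cdot\nu_{n(k)}$ and yields consistency; $\nu$ and $\sg$ are then defined relative to the new nest $\{F'_k\}$ (on which $\ph_k\equiv1$), not the original one.

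The final assertion also needs more than a ``transfer through the patching.'' Applying the last part of \Thm{BV} to the $\rho_k$ requires $\rho_k\in QR$, which you have not arranged, and even then the supports $F^{\rho_k}$ and the capacities $\Cp^{\rho_k}$ do not obviously compare with $F^\rho$ and $\Cp^\rho$ in the direction you need. The paper instead argues directly with the patched $\nu$: for $l\in E^*$ and $g\in\D^{1,2}_{F_k,b}$ it sets $G=g(\cdot)l$ in \Eq{ibp}, checks that the resulting functional $I(g)$ is bounded on $\cF^\rho$ (using $\rho\in L^\infty$ and a truncation of $g$), and invokes the integral representation theorem of \cite{Hi} together with \Lem{easy} to produce $\nu_l\in S^\rho$ with $\la l,\sg\ra_H\cdot\nu=\nu_l$ on $F^\rho$ and $0$ off it. You should adopt that route rather than pushing the conclusion through the individual $\nu_k$.
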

\begin{proof}
Take an $\cE$-nest $\{F_k\}_{k=1}^\infty$ and $\{\rho_k\}_{k=1}^\infty\subset BV\cap L^2$ as in \Lem{nest2}.
From \Thm{BV}, for each $k$, there exist an $H$-valued Borel function $\sg_k$ on $E$ and an $\cE$-smooth finite measure $\nu_k$ on $E$ such that $|\sg_k|_H=1$ $\nu_k$-a.e.\ and
\begin{equation}\label{eq:id}
  \int_E \rho_k \nabla^* G\,d\mu
  =\int_E \la G,\sg_k\ra_H\,d\nu_k
\end{equation}
for every $G\in\FCb^1(E^*)$.
By approximation, \Eq{id} holds for $G\in \D^{1,2}(H)_{b}$ with $\la G,\sg_k\ra_H$ replaced by $\la \tilde G,\sg_k\ra_H$.

Take an $\cE$-nest $\{F'_k\}_{k=1}^\infty$ and functions $\{\ph_k\}_{k=1}^\infty$ as in \Lem{nest}. For $k\in \N$, let $n(k)$ denote a number such that $\ph_k\in \D^{1,2}_{F_{n(k)}}$. 
We may assume that $\{n(k)\}_{k=1}^\infty$ is an increasing sequence.
Then, for $l\ge k$,
\begin{equation}\label{eq:consistency}
\ph_k\cdot\nu_{n(k)}=\ph_k\cdot\nu_{n(l)}\text{ and }
\sg_{n(k)}=\sg_{n(l)} \ (\ph_k\cdot\nu_{n(k)})\text{-a.e.}
\end{equation}
Indeed, for any $G\in \D^{1,2}(H)_b$, we have $\ph_kG\in\D^{1,2}(H)_{F_{n(k)},b}$ and
\begin{align*}
\int_E \ph_k\la \tilde G,\sg_{n(k)}\ra_H\,d\nu_{n(k)}
&=\int_E \rho_{n(k)}\nabla^*(\ph_k G)\,d\mu\\
&=\int_E \rho_{n(l)}\nabla^*(\ph_k G)\,d\mu\\
&=\int_E \ph_k\la \tilde G,\sg_{n(l)}\ra_H\,d\nu_{n(l)}.
\end{align*}
Thus, \Eq{consistency} follows from \Lem{uniqueness}.
Therefore, we can define $\sg$ and $\nu$ so that $\sg=\sg_{n(k)}$ on $F'_k$ and $\nu|_{F'_k}=\nu_{n(k)}|_{F'_k}$ for $k\in\N$, and $\nu(E\setminus\bigcup_{k=1}^\infty F'_k)=0$.
Then, the conditions described in the theorem are satisfied with $\{F_k\}_{k=1}^\infty$ replaced by $\{F'_k\}_{k=1}^\infty$.

Suppose that another $\nu'$, $\sg'$, and an $\cE$-nest $\{F''_k\}_{k=1}^\infty$ satisfy the required conditions. 
Applying \Lem{uniqueness} to the $\cE$-nest $\{F_k\cap F''_k\}_{k=1}^\infty$, we obtain that $\nu=\nu'$ and $\sg=\sg'$ $\nu$-a.e. 

  Lastly, suppose $\rho\in QR\cap L^\infty$ in addition.
  Let $l\in E^*$ and $g\in \D^{1,2}_{{F_k},b}$ for some $k\in\N$.
  Letting $G=g(\cdot)l$ in \Eq{ibp}, we obtain
  \begin{equation}\label{eq:Frho}
  \int_{F^\rho} (\partial_l g+g\cdot l(\cdot))\rho\,d\mu=\int_E \tilde g\la l,\sg\ra_H\,d\nu.
  \end{equation}
  Denote the left-hand side of \Eq{Frho} by $I(g)$. Then, $I$ provides a bounded functional on $\cF^\rho$ as well as on $\D^{1,2}$. From the observation that $I(g)$ does not change if $g$ is replaced by $(-M)\vee (g\wg M)$ with $M=\mu\esssup |g\cdot \bfone_{F_k}|$, the inequality $I(g)\le \|g|_{F^\rho}\|_{L^\infty(\rho\cdot\mu)}|l|_H\nu(F_k)$ holds.
 Then, from \Lem{easy} and \cite[Theorem~2.18]{Hi}, there exists a unique $\nu_l\in S^\rho$ such that $I(g)=\int_{F^\rho} \tilde g\,d\nu_l$ for all $g\in \bigcup_{k=1}^\infty \D^{1,2}_{F_k,b}$. From the uniqueness of the integral representation of $I(g)$ as a functional on $\D^{1,2}$, we conclude that $(\la l,\sg\ra_H\cdot\nu)|_{E\setminus F^\rho}=0$ and $(\la l,\sg\ra_H\cdot\nu)|_{F^\rho}=\nu_l\in S^\rho$.
  Because $l$ is arbitrary, we deduce that $\nu|_{E\setminus F^\rho}=0$ and $\nu|_{F^\rho}\in S^\rho$.
\end{proof}
We will denote $\nu$ and $\sg$ in the theorem above by $\|D\rho\|$ and $\sg_\rho$, respectively.
\begin{theorem}[Skorokhod representation]\label{th:decomp}
  Let $\rho\in \BVloc\cap QR\cap L^\infty$. Then the sample 
  path of the diffusion process $\mathbf{M}^\rho=(X_t,\cM_t,P_z)$ associated with $(\cE^\rho,\cF^\rho)$
  admits the following expression as a sum of three $E$-valued continuous additive functionals.
  \begin{align}\label{eq:decomp}
    &X_t(\om)-X_0(\om)\notag\\
    &=W_t(\om)-\frac12\int_0^t X_s(\om)\,ds
    +\frac12\int_0^t\sg_\rho(X_s(\om))\,dA_s^{\|D\rho\|}(\om),
    \quad t\ge0.
 \end{align}
  Here, $A^{\|D\rho\|}$ is a real-valued positive continuous additive functional associated with $\|D\rho\|$ via the Revuz correspondence.
  Moreover, for $\cE^\rho$-q.e.~$z\in F^\rho$, $\{W_t\}_{t\ge0}$ is the $\{\cM_t\}$-Brownian motion 
  on $E$ under $P_z$.
\end{theorem}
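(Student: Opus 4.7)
The plan is to apply the Fukushima decomposition of additive functionals under $(\cE^\rho,\cF^\rho)$ to each coordinate function $l\in E^*$, to identify the zero-energy part via the integration by parts formula of \Thm{2}, and then to assemble the resulting family of real-valued equations into the $E$-valued identity \Eq{decomp} using the separability of $E^*$.

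The first step is a form identity. For $l\in E^*$ and $g\in\D^{1,2}_{F_k,b}$, where $\{F_k\}_{k=1}^\infty$ is the $\cE$-nest supplied by \Thm{2}, I would apply \Eq{ibp} to $G=g\cdot l\in\D^{1,2}(H)_{F_k,b}$ and use the Ornstein--Uhlenbeck divergence formula $\nabla^*(gl)=g\cdot l(\cdot)-\la\nabla g,l\ra_H$ to obtain
\begin{equation*}
\cE^\rho(l,g)=\frac12\int_{F^\rho}g\,l(\cdot)\,\rho\,d\mu-\frac12\int_{F^\rho}\tilde g\,\la l,\sg_\rho\ra_H\,d\|D\rho\|.
\end{equation*}
By \Lem{easy}\rom{(ii)}, $\bigcup_k\D^{1,2}_{F_k,b}|_{F^\rho}$ is dense in $\cF^\rho$, so the identity extends to all $g\in\cF^\rho_b$. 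The right-hand side exhibits $\cE^\rho(l,\cdot)$ as being represented by the signed smooth measure $\nu:=\frac12\,l\cdot(\rho\mu)-\frac12\la l,\sg_\rho\ra_H\cdot\|D\rho\|$.

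Next I would invoke the local Fukushima decomposition. Since $l|_{F^\rho}\in\cF^\rho_{\loc}$ by \Lem{easy}\rom{(ii)} and $(\cE^\rho,\cF^\rho)$ is strongly local and quasi-regular, one has $l(X_t)-l(X_0)=M_t^{[l]}+N_t^{[l]}$ with $M^{[l]}$ a local martingale additive functional and $N^{[l]}$ a local continuous additive functional of zero energy. The Revuz correspondence combined with the form identity then identifies $N^{[l]}$ as the CAF associated with $-\nu$:
\begin{equation*}
N_t^{[l]}=-\frac12\int_0^t l(X_s)\,ds+\frac12\int_0^t\la l,\sg_\rho(X_s)\ra_H\,dA_s^{\|D\rho\|},
\end{equation*}
where $A^{\|D\rho\|}$ is the PCAF corresponding to $\|D\rho\|\in S^\rho$ by the last assertion of \Thm{2}.

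Finally I would realize $M^{[l]}$ as $l(W_t)$ for some Brownian motion $W$ on $E$. The energy measure of $l$ under $\cE^\rho$ is $|l|_H^2\rho\,d\mu$, so the Revuz measure of $\la M^{[l]}\ra$ with respect to the reference measure $\rho\mu$ has constant density $|l|_H^2$, whence $\la M^{[l]}\ra_t=|l|_H^2 t$ and, by polarization, $\la M^{[l_1]},M^{[l_2]}\ra_t=\la l_1,l_2\ra_H t$. Choosing a complete orthonormal system $\{l_j\}\subset E^*$ of $H$, Lévy's characterization turns $\{M^{[l_j]}\}$ into a family of independent standard Brownian motions, which the argument of \cite[Theorem~4.2]{FH} assembles into an $E$-valued Brownian motion $W$ with $l(W_t)=M^{[l]}_t$ for every $l\in E^*$; testing against a countable dense subset of $E^*$ then upgrades the family of scalar decompositions to the $E$-valued identity \Eq{decomp}. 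The main technical obstacle is the localization: since $\rho$ is only locally BV and $\|D\rho\|$ may have infinite total mass, the Revuz identification must be performed nestwise on $\{F_k\cap F^\rho\}_{k=1}^\infty$ and one must check that the resulting $A^{\|D\rho\|}$ is $P_z$-a.s.\ locally finite in time for $\cE^\rho$-q.e.\ starting point $z$, so that the passage from the scalar local decompositions to the global $E$-valued Skorokhod representation is justified.
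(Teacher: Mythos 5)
Your proposal follows essentially the same route as the paper: derive the form identity for $l$ from \Eq{ibp}, apply the (localized) Fukushima decomposition to identify the zero-energy part with the CAF of the signed smooth measure, and assemble the coordinate martingales into an $E$-valued Brownian motion as in \cite[Lemma~4.1]{FH}. The one place where the paper is more careful than your middle step is that the form identity is extended only to $g\in\bigcup_{k}\cF^\rho_{F'_k,b}$ for a suitable nest (not to all of $\cF^\rho_b$, since $\|D\rho\|$ need not be a finite measure), which is exactly the localization issue you flag at the end and which the paper handles by invoking \cite[Theorem~6.1]{Fu99} in place of the non-localized version.
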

\begin{proof}
The proof is provided along the same lines as those of \cite[Theorem~4.2]{FH} and \cite[Theorem~3.2]{Fu} with the use of \cite[Theorem~6.1]{Fu99} instead of \cite[Theorem~6.2]{Fu99} (or \cite[Theorem~2.2]{Fu}).
  We define $\{W_t\}_{t\ge0}$ so that \Eq{decomp} holds. Then, $\{W_t\}_{t\ge0}$ is an $E$-valued continuous additive functional with the same defining and exceptional sets as $A^{\|D\rho\|}$.
  Take the $\cE$-nest $\{F_k\}_{k=1}^\infty$ in \Thm{2}.
  Let $l\in E^*$ and consider the identity
 \begin{align*}
    l(X_t)-l(X_0)
    =l(W_t)-\frac12\int_0^t l(X_s)\,ds
    +\frac12\int_0^t\la l,\sg_\rho(X_s)\ra_H\,dA_s^{\|D\rho\|}.
 \end{align*} 
  We note that $l(\cdot)|_{F^\rho}\in \cF^\rho$.
  From \Eq{ibp}, the identity
  \[
  \cE^\rho(l(\cdot),g)=\frac12\int_{F^\rho} gl(\cdot)\rho\,d\mu-\frac12\int_{F^\rho}\tilde g\la l,\sg_\rho\ra_H\,d\|D\rho\|
  \]
  holds for $g\in \bigcup_{k=1}^\infty\D^{1,2}_{F_k,b}$.
  Take a nest $\{F'_k\}_{k=1}^\infty$ and functions $\{\ph_k\}_{k=1}^\infty$ as in \Lem{nest}.
  For any $g\in \cF^\rho_{F'_k,b}$ with $k\in\N$, there exist $\{g_n\}\subset \bigcup_{m=1}^\infty\D^{1,2}_{F_m,b}$ such that $\{g_n\}$ are uniformly bounded and $g_n|_{F^\rho}$ converges to $g$ in $\cF^\rho$ as $n\to\infty$.
  Since $\cE(g_n\ph_k,g_n\ph_k)^{1/2}\le \cE(g_n,g_n)^{1/2}\|\ph_k\|_\infty+\cE(\ph_k,\ph_k)^{1/2}\|g_n\|_\infty $, $\|g_n\ph_k\|_2\le \|g_n\|_\infty\|\ph_k\|_2$, and $(g_n\ph_k)|_{F^\rho}\to g$ in $L^2(\rho\cdot\mu)$, the Ces\`aro means of a certain subsequence of $\{(g_n\ph_k)|_{F^\rho}\}_{n=1}^\infty$, which are all elements of $\D^{1,2}_{F_l,b}|_{F^\rho}$ for some $l$, converges to $g$ in $\cF^\rho$.
  A further suitable subsequence of their $\cE^\rho$-quasi-continuous modifications converges $\cE^\rho$-q.e.\ from \cite[Theorem~2.1.4]{FOT}.
  Therefore, the above identity holds for $g\in\bigcup_{k=1}^\infty \cF^\rho_{F'_k,b}$, where $\tilde g$ is interpreted as an $\cE^\rho$-quasi-continuous modification of $g$.
 From \cite[Theorem~6.1]{Fu99}, the Fukushima decomposition of $l(X_t)-l(X_0)$ is given by the sum of $M_t^l:=l(W_t)$ and $N_t^l:=-\frac12\int_0^t l(X_s)\,ds
    +\frac12\int_0^t\la l,\sg_\rho(X_s)\ra_H\,dA_s^{\|D\rho\|}$.
  Moreover, the quadratic variation of $\{M_t^l\}$ is equal to $\{t|l|_H^2\}$.
  From \cite[Lemma~4.1]{FH}, we conclude that $\{W_t\}_{t\ge0}$ is the $E$-valued Brownian motion, which completes the proof.  
\end{proof}
\section{Indicator functions in $\BVloc$}
In this section, we provide some nontrivial examples of functions in $\BVloc$.
Let $d\in\N$ and $T>0$.
We consider the classical $d$-dimensional Wiener space as $(E,H,\mu)$: that is,
\begin{align*}
E&=\{w\in C([0,T]\to\R^d)\mid w(0)=0\},\\
H&=\left\{h\in E\;\vrule\;
\text{$h$ is absolutely continuous and } \int_0^T |\dot h(s)|_{\R^d}^2\,ds<\infty\right\},
\end{align*}
and $\mu$ is the Wiener measure on $E$.
For a subset $A$ of $\R^d$, $\overline{A}$ (resp.\ $\partial A$) denotes the closure (resp.\ boundary) of $A$, and $A^c$ denotes $\R^d\setminus A$.
Define some subsets of $E$ as follows:
\begin{align*}
\Xi_A&=\{w\in E\mid w(t)\in A\mbox{ for all } t\in[0,T]\},\\
\Theta_A&=\{w\in E\mid w(t)\in A\mbox{ for some } t\in[0,T]\},\\
\partial \Xi_A&=\left\{w\in E\;\vrule\;\parbox[l]{0.42\hsize}{$w(t)\in \overline{A}$ for all $t\in[0,T]$ and $w(s)\in \partial{A}$ for some $s\in[0,T]$}\right\},\\
\partial' \Xi_A&=\left\{w\in E\;\vrule\;\parbox[l]{0.53\hsize}{$w(t)\in \overline{A}$ for all $t\in[0,T]$ and $w(s)\in \partial{A}$ for some \emph{unique} $s\in[0,T]$}\right\}.
\end{align*}
We note that $\partial \Xi_A$ is the topological boundary of $\Xi_A$ in $E$ with the uniform topology.
A sufficient condition for $\bfone_{\Xi_A}$ to belong to $BV$ was given in \cite{U,HU}.
Here, we provide a sufficient condition for $\bfone_{\Xi_A}$ to belong to $\BVloc$.

For $x\in \R^d$ and $r\ge0$, we write $B(x,r)$ and $\overline B(x,r)$ for $\{z\in\R^d\mid |z-x|_{\R^d}<r\}$ and $\{z\in\R^d\mid |z-x|_{\R^d}\le r\}$, respectively.
Let $O$ be a proper open subset of $\R^d$ such that $0\in O$.
For $y\in\partial O$, we define $\dl(y)\in[0,+\infty]$ as 
\[
\dl(y)=\sup\{r\ge0\mid \mbox{there exists $z\in O^c$ such that }\overline O\cap \overline B(z,r)=\{y\}\}.
\]
From \cite[Theorem~5.1]{HU}, $\bfone_{\Xi_{\overline{O}}}\in BV$ if the uniform exterior ball condition 
\begin{equation}\label{eq:uebc}
\inf_{y\in\partial O}\dl(y)>0
\end{equation}
holds.
In order to describe a weaker condition which ensures that $\bfone_{\Xi_{\overline{O}}}\in \BVloc$, we introduce the Riesz (and logarithmic) capacities on $\R^d$.
For a Borel set $A$ of $\R^d$, let $\fP(A)$ denote the set of all Borel probability measures on $A$.
For $\b\ge0$, the $\b$-capacity $\Cp_\b(A)$ of $A$ is defined as
\[
{\Cp_\b(A)}=
\left(\inf_{\lm\in\fP(A)}\iint_{A\times A}g_\b(|x-y|_{\R^d})\,\lm(dx)\lm(dy)\right)^{-1},
\]
where $g_\b(t)=t^{-\b}$ for $\b>0$ and $g_0(t)=\log(t^{-1}\vee e)$.
For $\b<0$, we define $\Cp_\b(A)=1$ for $A\ne\emptyset$ and $\Cp_\b(\emptyset)=0$. These are Choquet capacities.

For $\b\ge0$, let $\fH^\b(A)$ denote the $\b$-dimensional Hausdorff measure of $A\subset \R^d$.
It is known that, if a Borel set $A$ of $\R^d$ satisfies $\fH^\b(A)<\infty$, then $\Cp_\b(A)=0$ (see, e.g., \cite[\S1, Theorem~1]{C}).

For $r>0$, a closed subset $\Sigma_r$ of $\R^d$ is defined as
\begin{equation}\label{eq:Sigma}
\Sigma_r=\overline{\{y\in\partial O\mid \dl(y)<r\}}.
\end{equation}
Denote $\bigcap_{r>0}\Sigma_r$ by $\Sigma$.
The following is the main theorem of this section.
\begin{theorem}\label{th:example}
Suppose that
\begin{equation}\label{eq:cap}
\Cp_{d-4}\left(\Sigma\right)=0.
\end{equation}
Then, $\bfone_{\Xi_{\overline{O}}}\in\BVloc\cap QR$.
In particular, the conclusion of \Thm{decomp} holds with $\rho=\bfone_{\Xi_{\overline{O}}}$.
Moreover, $\Cp^{\bfone_{\Xi_{\overline{O}}}}(\partial\Xi_{\overline{O}}\setminus\partial'\Xi_{\overline{O}})=0$.
\end{theorem}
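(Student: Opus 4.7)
The theorem has three conclusions. The Skorokhod representation is a direct application of \Thm{decomp} once we have $\bfone_{\Xi_{\overline O}}\in\BVloc\cap QR\cap L^\infty$, so the real work lies in establishing that membership and in the final capacity estimate.

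For the $\BVloc$ part I would follow an approximation scheme adapted from \cite{HU}. The first step is to turn the Riesz-capacity hypothesis $\Cp_{d-4}(\Sigma)=0$ into an $\cE$-capacity statement on path space: given an open neighborhood $U\supset \Sigma$ in $\R^d$, the $\cE$-capacity of the path set $\Theta_{\overline U}\subset E$ is controlled by (a function of) the $(d-4)$-Riesz capacity of $U$. This is the Malliavin $(1,2)$-Sobolev upgrade of the classical Riesz-potential / Brownian-hitting bound, which raises the critical exponent from $d-2$ to $d-4$. Using \Eq{cap} one then chooses $r_k\downarrow 0$ and a decreasing family of open sets $U_k\supset \Sigma_{r_k}$ with $\sum_k\Cp(\Theta_{\overline{U_k}})<\infty$, producing an $\cE$-nest $F_k:=E\setminus\Theta_{U_k}$. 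On each $F_k$ I would then match $\bfone_{\Xi_{\overline O}}$ with a global $BV$ function $\rho_k$, as follows. Set $O_k:=O\setminus\overline{U_k}$; the crucial geometric fact is that $O_k$ satisfies the uniform exterior ball condition \Eq{uebc}. At an original boundary point $y\in\partial O\cap\partial O_k$ we have $y\notin\overline{U_k}\supset\Sigma_{r_k}$, which by definition of $\Sigma_{r_k}$ gives $\dl(y)\ge r_k$; at a newly created boundary point on $\partial\overline{U_k}\cap O$ the exterior ball is supplied by $\overline{U_k}$ itself, provided $U_k$ is engineered as a locally finite union of open balls of uniform size around $\Sigma_{r_k}$. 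By \cite[Theorem~5.1]{HU} we then have $\rho_k:=\bfone_{\Xi_{\overline{O_k}}}\in BV$, and since every $w\in F_k$ avoids $\overline{U_k}$ we have $\rho_k=\bfone_{\Xi_{\overline O}}$ on $F_k$, yielding $\bfone_{\Xi_{\overline O}}\in\BVloc$ via \Defn{loc}. The $QR$ property would then follow by applying the standard closability criterion as in \cite[Theorem~2.1]{Fu} to the integration-by-parts identity \Eq{ibp} of \Thm{2}, localized over the nest just built.

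For the final claim $\Cp^{\bfone_{\Xi_{\overline O}}}(\partial\Xi_{\overline O}\setminus\partial'\Xi_{\overline O})=0$ the plan is to identify the Revuz measure $\|D\rho\|$ of \Thm{2}, on each $F_k$, with the restriction of the surface measure on $\partial\Xi_{\overline{O_k}}$ constructed in \cite{HU}. By that construction the surface measure is supported on paths touching $\partial O_k$ at a unique time, so $\|D\rho\|$ assigns zero mass to $\partial\Xi_{\overline O}\setminus\partial'\Xi_{\overline O}$. Since the only mechanism for the reflected process $\mathbf{M}^\rho$ to visit the topological boundary $\partial\Xi_{\overline O}$ is through the boundary push encoded by $A^{\|D\rho\|}$ via the decomposition \Eq{decomp}, a $\|D\rho\|$-null Borel subset of $\partial\Xi_{\overline O}$ is $\cE^\rho$-polar.

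The main obstacle is the first step of the second paragraph --- the passage from the Riesz-capacity assumption on $\R^d$ to an $\cE$-capacity estimate on the infinite-dimensional path space $E$ --- both because the Wiener-space capacity is significantly stronger than classical hitting probabilities and because the correct exponent $d-4$ must be tracked. The geometric engineering of $O_k$ so that the uniform exterior ball condition holds simultaneously at original and at newly created boundary points is a secondary but genuinely delicate point.
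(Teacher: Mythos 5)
Your plan diverges from the paper's proof in ways that leave genuine gaps, the most serious being the last claim. The assertion that a $\|D\rho\|$-null Borel subset of $\partial\Xi_{\overline O}$ is $\cE^\rho$-polar ``because the only mechanism for the process to visit the boundary is the push encoded by $A^{\|D\rho\|}$'' is not a valid implication: polarity is a capacity statement and does not follow from nullity for the Revuz measure of the boundary local time (for reflecting Brownian motion on a half-space, surface-measure-null subsets of the boundary are by no means automatically polar). The paper instead proves $\Cp^{\bfone_{\Xi_{\overline O}}}(V_s)=0$ for each $s$ \emph{directly}, by building an explicit test function $\eta=(g\circ\zeta\circ f)\ph_k+e_{E\setminus F'_k}$ with a logarithmic cutoff $g$ ($g'(t)\sim -3\eps/t$), and estimating its energy via the two-sided quantitative hitting bound of \Prop{path} applied at the two time intervals $[0,s]$ and $[s,T]$; this gives $\Cp^{\bfone_{\Xi_{\overline O}}}(V_s)\le C\eps+o(1)$. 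Nothing in your outline replaces this estimate. A second omission: you never establish $QR$. The paper's route is to show $\mu(\Xi_{\overline O}\setminus\Xi_O)=0$ (again via \Prop{path} together with $\Cp(\Theta_\Sg)=0$), so that $\bfone_{\Xi_{\overline O}}$ has a lower semicontinuous version and closability follows from \cite[p.~230]{Fu}; your appeal to ``the standard closability criterion applied to \Eq{ibp}'' is circular in spirit (the $S^\rho$ part of \Thm{2} already presupposes $\rho\in QR$) and skips the nontrivial fact that $\Xi_{\overline O}$ and $\Xi_O$ differ by a null set.

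For the $\BVloc$ membership your route is genuinely different from the paper's and is not obviously unsalvageable, but it is not carried out where it is delicate. The paper does \emph{not} produce global $BV$ functions by modifying $O$; it takes $F_k=\Xi_{B_k}$ with $B_k=\overline{A_{1/k}^c}$ (an $\cE$-nest by \Lem{dec}, which together with \Eq{d4} already supplies the $d-4$ capacity transfer you single out as the ``main obstacle''), and then verifies condition (c) of \Thm{1} with the explicit Lipschitz approximations $f_m=\chi(m\inf_t q(w(t)))$, whose gradients restricted to $F_k$ are $L^1$-bounded uniformly in $m$ by \Prop{path}. This bypasses entirely the geometric engineering you propose. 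In your scheme, the identity $\bfone_{\Xi_{\overline{O_k}}}=\bfone_{\Xi_{\overline O}}$ on $E\setminus\Theta_{U_k}$ requires $\overline O\setminus U_k\subset\overline{O\setminus\overline{U_k}}$, which can fail at points of $\partial O\cap\partial U_k$ where $O$ is locally swallowed by $\overline{U_k}$, and the uniform exterior ball condition for $O\setminus\overline{U_k}$ at such mixed boundary points needs a separate argument; neither point is addressed. I would recommend adopting the paper's approximation scheme, which is both shorter and avoids these issues.
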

\begin{remark}\label{rem:remark}
\begin{romanlist}[(v)]
\item The set $\Xi_{\overline{O}}$ coincides with the closure of $\Xi_O$ in $E$.
\item If $d\ge4$ and there exists a sequence of Borel subsets $\{B_n\}_{n=1}^\infty$ of $\R^d$ such that $\bigcup_{n=1}^\infty B_n=\R^d$ and $\fH^{d-4}(\Sg\cap B_n)<\infty$ for every $n\in\N$, then \Eq{cap} holds. 
\item Since $\inf_{z\in\partial O}\dl(z)>0$ implies $\Sigma=\emptyset$, the uniform exterior condition~\Eq{uebc} implies condition~\Eq{cap}.
\item The diffusion $\{X_t\}$ associated with $(\cE^\rho,\cF^\rho)$ for $\rho=\bfone_{\Xi_{\overline{O}}}$ is regarded as the modified reflecting Ornstein--Uhlenbeck process on $\Xi_{\overline{O}}$; the term ``modified'' is added since the domain $\cF^\rho$ is defined as the closure of smooth functions and it is not clear whether $\cF^\rho$ is the maximal domain.
\item The fact that $\Cp^{\bfone_{\Xi_{\overline{O}}}}(\partial\Xi_{\overline{O}}\setminus\partial'\Xi_{\overline{O}})=0$ implies that the measure $\|D\bfone_{\Xi_{\overline{O}}}\|$ concentrates on $\partial'\Xi_{\overline{O}}$, which means that the process $\{X_t\}$ reflects only at $\partial'\Xi_{\overline{O}}$ in view of \Eq{decomp}.
\end{romanlist}
\end{remark}
\begin{example}
Let $\ph\colon [1,2)\to\R_+$ be a convex function such that $\ph(1)=0$ and $\ph(t)>0$ for $t\in(1,2)$.
Suppose $d\ge4$ and consider an open subset $O$ of $\R^d$ that is defined by
\[
 O=B(0,2)\setminus\{(x_1,\dots,x_d)\in\R^d\mid x_d\in[1,2)\text{ and }\ph(x_d)^2\ge x_1^2+\dots+x_{d-1}^2\}.
\]
Then $\Sigma=\{(0,\dots,0,1)\}$ and $\fH^{d-4}(\Sigma)<\infty$. Therefore, $\bfone_{\Xi_{\overline{O}}}\in\BVloc$ from \Thm{example} and \Rem{remark}~(ii).
It is likely that $\bfone_{\Xi_{\overline{O}}}\notin BV$ in general, but I have no proof to offer at the moment.
\end{example}
The capacity $\Cp_{d-4}$ is involved in \Thm{example} for the following reason. From Theorem~1.1 of \cite{KS} and the arguments in Section~7 in that paper, for $a\in(0,T]$ and $M>0$, there exists a constant $C>0$ depending only on $a$ and $M$ such that, for any closed set $A$ in $\overline{B}(0,M)\subset \R^d$,
\begin{align*}
  C^{-1}\Cp_{d-4}(A)&\le \mathbf{P}(\{Z_s(t)\in A\text{ for some }(s,t)\in[a,T]\times[a,T]\})\\
  &\le C\Cp_{d-4}(A),
\end{align*}
where $(\{Z_s\}_{s\ge0},\mathbf{P})$ is the Ornstein--Uhlenbeck process on $E$ with initial distribution $\mu$.
This implies the following property.
\begin{equation}\label{eq:d4}
\parbox[l]{0.8\hsize}{%
 If a closed subset $A$ of $\R^d\setminus\{0\}$ satisfies $\Cp_{d-4}(A)=0$, then $\Cp(\Theta_A)=0$.}
\end{equation}
In particular, since $0\notin\Sg$, condition~\Eq{cap} implies 
\begin{equation}\label{eq:slim}
\Cp(\Theta_\Sg)=0.
\end{equation}

For the proof of \Thm{example}, we provide some quantitative estimates as in \cite{U,HU}.
We define a Lipschitz continuous function $q$ on $\R^d$ by
\[
  q(x)=\inf_{y\in O^c}|x-y|_{\R^d}-\inf_{y\in O}|x-y|_{\R^d},
 \quad x\in\R^d.
\]
For $r\ge0$, set $O_r=\{x\in\R^d\mid q(x)>r\}$.
Note that $O_0=O$ and $\{q(x)\ge0\}=\overline{O}$.

Let $W=C([0,\infty)\to\R^d)$ and $\cB(W)$ be the Borel $\sg$-field of $W$.
Let $\{\hat P_x\}_{x\in\R^d}$ be the probability measures on $W$ such that the coordinate process $\{\om_t\}_{t\ge0}$ is the $d$-dimensional Brownian motion starting at $x$ under $\hat P_x$ for each $x\in\R^d$.
For $t\ge0$, let $\hat\cF_t$ denote the $\sg$-field generated by $\{\om_s\mid s\in[0,t]\}$.
For an $\{\hat\cF_t\}$-stopping time $\tau$, define 
$\hat\cF_\tau$ as $\{A\in\cB(W)\mid A\cap\{\tau\le t\}\in\hat\cF_t \text{ for all }t\ge0\}$.
We denote the integral with respect to $\hat P_x$ by $\hat E_x$.
For $s>0$, the shift operator $\theta_s\colon W\to W$ is defined by $(\theta_s \om)_t=\om_{s+t}$, $t\ge0$.

The following claims (\Lem{ito}--\Prop{path}) are slight modifications of those in \cite{HU}.
\begin{lemma}\label{lem:ito}
Let $x\in\overline O$ and choose $y\in\partial O$ such that $q(x)=|x-y|_{\R^d}$.
Suppose that $\dl(y)>0$.
Let  $\dl\in (0,\dl(y))$ and take $z\in O^c$ such that $B(z,\dl)\cap\overline O=\{y\}$.
Let $C_\dl=(d-1)/(2\dl)$ and $R_t=|\om_t-z|_{\R^d}$ for $\om=\{\om_t\}\in W$.
Then, for each $u>0$,
\[
\{R_t\ge\dl \text{ for all }t\in[0,u]\}\subset\{R_t\le q(x)+\dl+C_\dl t+S_t \text{ for all }t\in[0,u]\}
\]
up to a $\hat P_x$-null set.
Here, $S_t$ is the 1-dimensional Brownian motion under $\hat P_x$ starting at $0$ that is defined by
\[
   S_t(\om)=\sum_{i=1}^d\int_0^t\frac{\om_s^{(i)}-z^{(i)}}{R_s}\,d\om^{(i)}_s,\quad
   \om_s=(\om^{(1)}_s,\ldots,\om^{(d)}_s),\ 
   z=(z^{(1)},\ldots,z^{(d)}),
\]
up to the $\{\hat\cF_t\}$-stopping time $\inf\{t\ge0\mid R_t=0\}$.
\end{lemma}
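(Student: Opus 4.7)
The plan is to derive the inequality via the Bessel-type decomposition of $R_t$ and a triangle-inequality estimate on $R_0$. First, I would apply It\^o's formula to the function $\R^d\ni w\mapsto |w-z|_{\R^d}$, valid as long as $R_t>0$. On the event $\{R_t\ge\dl \text{ for all }t\in[0,u]\}$ (or after localizing by $\tau_n=\inf\{t\ge0\mid R_t\le 1/n\}$ and letting $n\to\infty$), this yields
\[
  R_t = R_0 + S_t + \int_0^t \frac{d-1}{2R_s}\,ds, \qquad t\in[0,u],
\]
where $S_t$ is the stochastic integral given in the statement. By L\'evy's characterization, $S_t$ is a one-dimensional Brownian motion under $\hat P_x$ up to the hitting time of $0$ by $R$, since it is a continuous local martingale with quadratic variation $\int_0^t \sum_i ((\om_s^{(i)}-z^{(i)})/R_s)^2\,ds = t$.

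Next, on the event $\{R_s\ge \dl \text{ for all }s\in[0,u]\}$, the drift term is controlled by
\[
  \int_0^t \frac{d-1}{2R_s}\,ds \le \frac{d-1}{2\dl}\,t = C_\dl\, t, \qquad t\in[0,u],
\]
which gives $R_t \le R_0 + C_\dl t + S_t$. It remains to bound $R_0=|x-z|_{\R^d}$. Since $z$ was chosen with $\overline O\cap \overline B(z,\dl)=\{y\}$, we have $|y-z|_{\R^d}\le\dl$; combined with $q(x)=|x-y|_{\R^d}$, the triangle inequality gives $R_0\le q(x)+\dl$, and the desired inclusion follows.

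The main obstacle, which is really only a bookkeeping one, is justifying the decomposition of $R_t$ simultaneously for all starting points $x\in\overline O$, including the possibility that $R_0=0$ or that the Brownian path eventually hits $z$. This is handled exactly as in the statement's convention: one defines $S_t$ only up to $\tau_0:=\inf\{t\ge0\mid R_t=0\}$, then uses the stopping times $\tau_n$ where $R$ reaches $1/n$ to apply It\^o's formula smoothly, and observes that on the event in question, $\tau_0>u$, so there is nothing to worry about on $[0,u]$.
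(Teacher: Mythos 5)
Your proposal is correct and follows essentially the same route as the paper: It\^o's formula applied to $w\mapsto|w-z|_{\R^d}$ gives the Bessel-type decomposition, the drift is bounded by $C_\dl t$ on the event $\{R_s\ge\dl\}$, and $R_0=|x-z|_{\R^d}\le q(x)+\dl$ (the paper in fact notes equality, but the inequality you derive is all that is needed). The localization at the hitting time of $0$ is handled the same way in both arguments.
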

\begin{proof}
Define $\sg=\inf\{t\ge0\mid R_t=0\}$.
Note that $R_0=|x-z|_{\R^d}=q(x)+\dl$ $\hat P_x$-a.e.
By virtue of It\^o's formula,
\[
  R_t=q(x)+\dl+\int_0^t\frac{d-1}{2R_s}\,ds+S_t
  \quad\text{on }\{t<\sg\}\quad
  \hat P_x\text{-a.e.}
\]
Therefore, the assertion holds.
\end{proof}
\begin{proposition}\label{prop:1}
In the same situation as in \Lem{ito}, for every $u>0$,
\[
  \hat P_x\left[\inf_{t\in[0, u]}q(\om_t)\ge0\right]\le \left(\frac{d-1}{\dl(y)}+u^{-1/2}\right)q(x).
\]
\end{proposition}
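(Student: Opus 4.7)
My plan is to combine \Lem{ito} with the classical hitting-time distribution of a one-dimensional Brownian motion with positive drift.

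First, I would fix $\delta\in(0,\delta(y))$ and choose $z\in O^c$ with $|y-z|_{\R^d}=\delta$ and $\overline B(z,\delta)\cap\overline O=\{y\}$. Such a $z$ is obtained from the definition of $\delta(y)$ by taking $r\in(\delta,\delta(y))$ together with a corresponding $z_0\in O^c$ satisfying $\overline O\cap\overline B(z_0,r)=\{y\}$, and sliding the center from $z_0$ toward $y$ along the segment $[y,z_0]$ until it lies at distance exactly $\delta$ from $y$; the new ball still lies inside $\overline B(z_0,r)$ and therefore meets $\overline O$ only at $y$. On the event $\{\inf_{t\in[0,u]}q(\om_t)\ge 0\}$ the path $\om_t$ lies in $\overline O$, and hence in $\overline B(z,\delta)^c\cup\{y\}$, so $R_t\ge\delta$ for every $t\in[0,u]$. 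Combined with \Lem{ito}, this yields, up to a $\hat P_x$-null set,
\[
\{\inf_{t\in[0,u]}q(\om_t)\ge 0\}\subset\{S_t+C_\delta t\ge -q(x)\text{ for all }t\in[0,u]\}.
\]

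Next, with $a=q(x)$ and $\mu=C_\delta=(d-1)/(2\delta)$, I would estimate the probability of the right-hand side: it equals $\hat P_x[\tau_0>u]$, where $\tau_0=\inf\{t\ge 0:S_t+\mu t=-a\}$ is the first time a Brownian motion with drift $\mu>0$ starting at $a>0$ reaches $0$. A Girsanov change of measure removing the drift, combined with the reflection principle for driftless Brownian motion, yields the classical formula
\[
\hat P_x[\tau_0>u]=\Phi\!\left(\frac{a+\mu u}{\sqrt u}\right)-e^{-2\mu a}\,\Phi\!\left(\frac{\mu u-a}{\sqrt u}\right),
\]
where $\Phi$ is the standard normal distribution function. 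Using $e^{-2\mu a}\ge 1-2\mu a$, $\Phi\le 1$, and $\Phi(y_2)-\Phi(y_1)\le (y_2-y_1)/\sqrt{2\pi}$ for $y_2\ge y_1$, this is bounded above by $2a/\sqrt{2\pi u}+2\mu a\le a\bigl((d-1)/\delta+1/\sqrt u\bigr)$. Letting $\delta\uparrow\delta(y)$ gives the stated inequality.

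The only nonroutine ingredient is the explicit formula for $\hat P_x[\tau_0>u]$, which I would either cite from a standard text on one-dimensional diffusions or derive in a few lines from Girsanov's theorem and the reflection principle. Everything else is elementary, though some care is needed in choosing $z$ so that $R_t\ge\delta$ (rather than merely $R_t>0$) on the event, as this is what makes the constant $C_\delta=(d-1)/(2\delta)$ in \Lem{ito} deliver the sharp factor $(d-1)/\delta(y)$ in the limit.
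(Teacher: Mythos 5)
Your argument is correct and follows essentially the same route as the paper: reduce via \Lem{ito} to the first-passage problem for the drifted Brownian motion $S_t+C_\dl t$ below $-q(x)$, bound the survival probability by the classical explicit law, and let $\dl\uparrow\dl(y)$. The only cosmetic difference is that the paper integrates the tail of the hitting-time density quoted from Borodin--Salminen, whereas you use the equivalent closed-form distribution function obtained from Girsanov and the reflection principle; both yield the same bound $q(x)\bigl((d-1)/\dl+u^{-1/2}\bigr)$.
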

\begin{proof}
Take $\dl\in(0,\dl(y))$.
From Lemma~\ref{lem:ito}, 
\[
\hat P_x\left[ R_t\ge\dl\text{ for all }t\in[0,u]\right]
\le \hat P_x\left[q(x)+\dl+C_\dl t+S_t\ge\dl\text{ for all }t\in[0,u]\right].
\]
Let $r>q(x)$ and define $\eta=\inf\{t\ge0\mid C_\dl t+S_t\le -r\}$.
The law of $\eta$ under $\hat P_x$ is given by 
\[
\hat P_x[\eta\in dt]=\bfone_{(0,\infty)}(t)\frac{r}{\sqrt{2\pi t^3}}\exp\left(-\frac{(r+C_\dl t)^2}{2t}\right)\,dt+(1-e^{-2C_\dl r})\dl_{\infty}(dt),
\]
where $\dl_\infty$ is the delta measure at $\infty$ (see, e.g., \cite[p.~295]{BS}).
Then, we have
\begin{align*}
\hat P_x\left[\inf_{t\in[0, u]}q(\om_t)\ge0\right]
&\le \hat P_x\left[ R_t\ge\dl\text{ for all }t\in[0,u]\right]
\le \hat P_x[\eta>u]\\
&=\int_u^\infty \frac{r}{\sqrt{2\pi t^3}}\exp\left(-\frac{(r+C_\dl t)^2}{2t}\right)\,dt
+1-e^{-2C_\dl r}\\
&\le \int_u^\infty \frac{r}{\sqrt{2\pi t^3}}\,dt
+2C_\dl r
=\sqrt{\frac2\pi}\frac{r}{\sqrt u}+2C_\dl r.
\end{align*}
Letting $r\to q(x)$ and $\dl\to \dl(y)$, we obtain the claim.
\end{proof}
For $r>0$, define an $\{\hat\cF_t\}$-stopping time $\tau_r$ by $\tau_r=\inf\{t\ge0\mid \om_t\not\in O_r\}$.
Let $\hat P_x^r$ be the law of $\tau_r$ under $\hat P_x$.
The following lemma is the same as \cite[Lemma~3.2]{HU}, so the proof is omitted.
\begin{lemma}
\label{lem:differentiable}
$\hat P_x^r([0,t])$ is differentiable in $t$ on $(0,\infty)$ and there exists a constant $\hat C_1>0$ independent of $x$, $r$ and $t$ such that
$
\frac{d}{dt}\hat P_x^r([0,t])\le \hat C_1 t^{-1}
$.
\end{lemma}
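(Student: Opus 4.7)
The plan is to follow the strategy of \cite[Lemma~3.2]{HU}. Setting $D := O_r$, the question reduces to controlling the density of the first exit time $\tau_r$ of Brownian motion from $D$. I would write $u(t, x) := \hat P_x[\tau_r > t] = P^D_t\bfone(x)$, where $P^D_t$ is the Dirichlet heat semigroup on $D$ with transition kernel $p^D_t(x,y)$, dominated by the free Gaussian kernel $p_t(x, y) = (2\pi t)^{-d/2}e^{-|x-y|^2/(2t)}$.

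The first step is differentiability. Standard parabolic regularity --- obtained for example by approximating $D$ from within by smooth exhausting subdomains $D_n \Subset D$, on which spectral theory provides smooth kernels, and passing to the limit via Gaussian domination $p^{D_n}_t \uparrow p^D_t$ --- yields joint smoothness of $p^D_t$ on $(0, \infty) \times D \times D$. Hence $u(t, x) = \int_D p^D_t(x, y)\,dy$ is smooth in $t > 0$, and $\hat P_x^r([0, t]) = 1 - u(t, x)$ is differentiable with derivative $f_{\tau_r, x}(t) := -\partial_t u(t, x) \ge 0$.

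For the quantitative $\hat C_1/t$ bound, I would apply the Markov property at time $t/2$: the Chapman--Kolmogorov identity $u(t, x) = \int_D p^D_{t/2}(x, y)\,u(t/2, y)\,dy$, differentiated in $t$ with $s=t/2$ treated as a fixed parameter on the right, yields
\[
  f_{\tau_r, x}(t) = \int_D p^D_{t/2}(x, y)\,f_{\tau_r, y}(t/2)\,dy.
\]
The strategy is then to bound this integral by combining a sharp Gaussian upper bound on the Dirichlet kernel $p^D_{t/2}$ (which vanishes linearly in the distance to $\partial D$, and whose $L^\infty$ size is controlled by $(\pi t)^{-d/2}$) with an upper bound on $f_{\tau_r, y}(t/2)$ derived from the 1-dimensional hitting-time density for Brownian motion starting at depth $q(y) - r$: the Lipschitz property of $q$ allows one to dominate $\hat P_y[\tau_r \le s]$ by $\hat P_y[\sup_{u \le s}|\omega_u - y| \ge q(y)-r]$. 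The resulting Gaussian integration produces a bound with dimensional constant.

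The main obstacle is the uniform control of $p^D_{t/2}$ and of the combined integral without any geometric regularity of $\partial O_r$, and obtaining a constant independent of $r$. This is handled by carrying out all estimates on the smooth exhausting subdomains $D_n$, where boundary-vanishing of $p^{D_n}_{t/2}$ is classical, and then letting $n \to \infty$ using monotone convergence of the killed kernels. Since every constant entering the argument is either a pure dimensional factor ($(2\pi t)^{-d/2}$ or a Gaussian normalization) or comes from 1-dimensional hitting-time estimates applied along the inward normal direction, the final constant $\hat C_1$ depends only on $d$ and, in particular, is independent of $x$, $r$, and $t$.
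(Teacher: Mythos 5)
The paper gives no proof of this lemma, deferring entirely to \cite[Lemma~3.2]{HU}, so your sketch has to be judged on whether it would constitute a complete argument. The differentiability part (exhaustion by smooth subdomains, monotone convergence of the killed kernels, interior parabolic regularity) is standard and fine, and the Chapman--Kolmogorov identity $f_{\tau_r,x}(t)=\int_{O_r}p^{O_r}_{t/2}(x,y)f_{\tau_r,y}(t/2)\,dy$ is correct. The quantitative step, however, has a genuine gap at its central point. You propose to bound the \emph{density} $f_{\tau_r,y}(t/2)$ by a one-dimensional first-passage density ``derived from'' the domination $\hat P_y[\tau_r\le s]\le\hat P_y[\sup_{u\le s}|\om_u-y|\ge q(y)-r]$. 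A pointwise inequality between distribution functions implies nothing pointwise about their derivatives, so no bound on $f_{\tau_r,y}$ follows from this comparison; the inequality you actually need --- an upper bound of the form $f_{\tau_r,y}(s)\le C_d\,(q(y)-r)\,s^{-3/2}e^{-c(q(y)-r)^2/s}$ for a completely arbitrary open set --- is essentially as hard as the lemma itself, and is precisely what the argument was supposed to produce. (Note also that the one-sided first-passage density is heuristically a \emph{lower} bound for the exit density, since $O_r$ can be left in many directions.)

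The other load-bearing ingredient is equally problematic: a Gaussian upper bound for $p^{O_r}_{t/2}$ that ``vanishes linearly in the distance to $\partial O_r$'' with uniform constants is a boundary-regularity statement (valid, e.g., for $C^{1,1}$ domains with constants depending on the $C^{1,1}$ character), not a fact about general open sets. The sets $O_r=\{q>r\}$ carry no uniform regularity as $r$ varies, so any such constant would depend on $r$ and on the geometry of $O$, whereas $\hat C_1$ must be independent of both. Approximating by smooth subdomains $D_n$ does not repair this, because the boundary-decay constants for $p^{D_n}$ degenerate as $D_n\uparrow O_r$ when $O_r$ is irregular, and monotone convergence only transfers estimates whose constants survive the limit. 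Finally, the concluding integration is left unexamined: $\partial O_r$ may be unbounded and the level sets $\{q=a\}$ have no a priori bound on $(d-1)$-dimensional measure, so the assertion that only dimensional constants enter is not substantiated. In short, the reduction is reasonable, but the two estimates it rests on are unproved, and one of them is obtained by an invalid inference from CDF domination to density domination.
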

For a closed subset $A$ of $\R^d$, a stopping time $\sg_A$ is defined as $\sg_A=\inf\{t\ge0\mid \om_t\in A\}$.
For $r>0$, let 
\begin{equation}\label{eq:Agm}
  A_r=\{x\in\R^d\mid \overline{B}(x,r)\cap \Sigma_r\ne\emptyset\},
\end{equation}
where $\Sg_r$ is defined in \Eq{Sigma}.
\begin{proposition}\label{prop:path}
For $\gm>0$, $u>0$, $r\in(0,\gm]$, and  $x\in \overline{O}$, the inequality
\begin{equation}\label{eq:prob}
  \hat P_x\left[0\le\inf_{t\in[0,u]}q(\om_t)\le r,\ \sg_{A_\gm}>u\right]\le\left(\frac{d-1}\gm+ \hat C_2u^{-1/2}\right)r
\end{equation}
holds with $\hat C_2=4\hat C_1+2$.
\end{proposition}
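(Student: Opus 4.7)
The plan is to split the analysis according to whether $q(x)\le r$ or $q(x)>r$. A key preliminary observation, used in both branches, is the following: if $\om_\tau\notin A_\gm$ at some stopping time $\tau$ with $q(\om_\tau)\le\gm$, then any closest boundary point $y\in\partial O$ to $\om_\tau$ lies in $\overline B(\om_\tau,\gm)$, so the relation $\overline B(\om_\tau,\gm)\cap\Sg_\gm=\emptyset$ forces $y\notin\Sg_\gm$ and hence $\dl(y)\ge\gm$ by \Eq{Sigma}. This places the post-$\tau$ excursion in the hypothesis of \Prop{1} with explicit control $\dl(y)\ge\gm$.

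In the easy case $q(x)\le r$, the upper bound $\inf_{t\in[0,u]}q(\om_t)\le r$ is automatic. If $x\in A_\gm$ the probability vanishes because $\sg_{A_\gm}=0$; otherwise the preliminary observation applies at the starting point itself, and \Prop{1} yields $\bigl(\frac{d-1}{\gm}+u^{-1/2}\bigr)q(x)\le \bigl(\frac{d-1}{\gm}+u^{-1/2}\bigr)r$, which is dominated by the right-hand side of \Eq{prob} since $\hat C_2\ge1$.

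For $q(x)>r$, let $\tau=\tau_r=\inf\{t\ge 0\mid q(\om_t)\le r\}$. On the event in question, $\tau\le u$ and $q(\om_\tau)=r$ by path continuity, while $\sg_{A_\gm}>u\ge\tau$ forces $\om_\tau\notin A_\gm$. The strong Markov property at $\tau$, combined with \Prop{1} applied to the post-$\tau$ path (with $q(\om_\tau)=r$ and $\dl(y)\ge\gm$), gives the conditional bound $\bigl(\frac{d-1}{\gm}+(u-\tau)^{-1/2}\bigr)r$ for remaining in $\overline O$ throughout $[0,u-\tau]$. Integrating against the law of $\tau$ yields
\[
\hat P_x\left[0\le\inf_{t\in[0,u]}q(\om_t)\le r,\ \sg_{A_\gm}>u\right]
\le \frac{(d-1)r}{\gm}+r\int_0^u (u-s)^{-1/2}\,p_x^r(s)\,ds,
\]
where $p_x^r$ is the density of $\tau_r$ under $\hat P_x$ furnished by \Lem{differentiable}.

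The main obstacle is controlling the singular kernel $(u-s)^{-1/2}$ against the density bound $p_x^r(s)\le \hat C_1 s^{-1}$. I split the integral at $s=u/2$: on $[0,u/2]$, the inequalities $(u-s)^{-1/2}\le\sqrt{2}\,u^{-1/2}$ together with $\int_0^{u/2}p_x^r\,ds\le 1$ bound that piece by $\sqrt{2}\,u^{-1/2}$; on $[u/2,u]$, the bounds $p_x^r(s)\le 2\hat C_1 u^{-1}$ and $\int_{u/2}^u (u-s)^{-1/2}\,ds=\sqrt{2u}$ bound that piece by $2\sqrt{2}\,\hat C_1 u^{-1/2}$. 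Summing, the integral is at most $\sqrt{2}(1+2\hat C_1)u^{-1/2}\le (2+4\hat C_1)u^{-1/2}=\hat C_2 u^{-1/2}$, which together with the $(d-1)r/\gm$ term completes the proof of \Eq{prob}.
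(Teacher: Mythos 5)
Your proof is correct and follows essentially the same route as the paper's: the same case split on the position of $x$ relative to $O_r$ and $A_\gm$, the same observation that $x\notin A_\gm$ with $q(x)\le\gm$ forces $\dl(y)\ge\gm$ for the nearest boundary point, the strong Markov property at $\tau_r$ combined with \Prop{1}, and the same splitting of the resulting integral at $s=u/2$ using \Lem{differentiable}. The only difference is cosmetic: the paper routes the conditioning through a dyadic decomposition of $u-\tau_r$ (paying a factor $\sqrt2$ in the kernel), whereas you condition directly with the $\hat\cF_{\tau_r}$-measurable remaining time $u-\tau_r$; both land within the stated constant $\hat C_2=4\hat C_1+2$.
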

\begin{proof}
Denote $(d-1)/\gm$ by $C'_\gm$.
If $x\in A_\gm$, \Eq{prob} is trivial since the left-hand side is $0$.
Suppose $x\in \overline{O}\setminus (O_r\cup A_\gm)$. Then, $q(x)<r\le\gm$.
Take $y\in \partial O$ such that $|x-y|_{\R^d}=q(x)$.
Because $x\notin A_\gm$ and $q(x)<\gm$, $y$ does not belong to $\Sigma_\gm$: that is, $\dl(y)\ge \gm$.
From \Prop{1}, 
\[
\hat P_x\left[0\le\inf_{t\in[0,u]}q(\om_t)\le r\right]
\le \left(\frac{d-1}{\dl(y)}+u^{-1/2}\right)q(x)
\le (C'_\gm+u^{-1/2})r.
\]
Next, suppose $x\in O_r\setminus A_\gm$. Then,
\begin{align*}
&\left\{0\le\inf_{t\in[0,u]}q(\om_t)\le r\right\}
= \left\{\tau_r\le u,\ 0\le\inf_{t\in[0,u-\tau_r]}q((\theta_{\tau_r}\om)_t)\right\}\\
&\subset \bigcup_{k=1}^\infty \left\{2^{-k}u<u-\tau_r\le 2^{-k+1}u,\ 0\le\inf_{t\in[0,2^{-k}u]}q((\theta_{\tau_r}\om)_t)\right\}\cup \{\tau_r=u\}.
\end{align*}
From Lemma~\ref{lem:differentiable}, $\hat P_x[\tau_r=u]=0$.
From the strong Markov property and \Prop{1},
\begin{align*}
&\hat P_x\left[2^{-k}u<u-\tau_r\le 2^{-k+1}u,\ 0\le\inf_{t\in[0,2^{-k}u]}q((\theta_{\tau_r}\om)_t),\ \sg_{A_\gm}>u\;\vrule\;\hat\cF_{\tau_r}\right]\\
&\le \bfone_{\{2^{-k}u<u-\tau_r\le 2^{-k+1}u\}}\cdot \hat E_x\left[\hat P_{\om_{\tau_r}}\!\left[0\le\inf_{t\in[0,2^{-k}u]}q(\om_t)\right];\om_{\tau_r}\in \partial O_r\setminus A_\gm\right]\\
&\le \bfone_{\{2^{-k}u<u-\tau_r\le 2^{-k+1}u\}}\cdot (C'_\gm+(2^{-k}u)^{-1/2})r\\
&\le \bfone_{\{2^{-k}u<u-\tau_r\le 2^{-k+1}u\}}\cdot (C'_\gm+((u-\tau_r)/2)^{-1/2})r.
\end{align*}
Therefore, 
\begin{align*}
&\hat P_x\left[0\le\inf_{t\in[0,u]}q(\om_t)\le r,\ \sg_{A_\gm}>u\right]\\
&\le r\hat E_x[C'_\gm+((u-\tau_r)/2)^{-1/2};\ \tau_r\le u]\\
&\le rC'_\gm+r\left(\frac{u}{4}\right)^{-1/2}\hat P_x[\tau_r\le u/2]
+r\int_{u/2}^u \left(\frac{u-s}2\right)^{-1/2}\hat C_1s^{-1}\,ds\\*
&\qquad\text{(from \Lem{differentiable})}\\
&\le rC'_\gm+2ru^{-1/2}+\frac{2\hat C_1 r}{u}\int_{u/2}^u\left(\frac{u-s}2\right)^{-1/2}\,ds\\
&=rC'_\gm+2ru^{-1/2}+4\hat C_1r u^{-1/2}.
\end{align*}
This completes the proof.
\end{proof}
\begin{lemma}\label{lem:dec}
Let $\{B_n\}_{n=1}^\infty$ be a decreasing sequence of closed subsets of $\R^d$.
If $\Cp_{d-4}\left(\bigcap_{n=1}^\infty B_n\right)=0$, then $\Cp(\Theta_{B_n})$ converges to $0$ as $n\to\infty$.
\end{lemma}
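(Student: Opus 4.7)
The plan is to combine property~\eqref{eq:d4}, the identity $\bigcap_n\Theta_{B_n}=\Theta_\Sg$ with $\Sg:=\bigcap_n B_n$, and the tightness of the capacity $\Cp$ on the Wiener space $E$ (that is, the existence of an $\cE$-nest of compact sets, which is part of the quasi-regularity of $(\cE,\cF)$) to obtain continuity from above of $\Cp$ along the decreasing sequence of closed sets $\{\Theta_{B_n}\}$.

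First I check that each $\Theta_{B_n}$ is closed in $E$ under the uniform topology and that $\bigcap_{n=1}^\infty\Theta_{B_n}=\Theta_\Sg$. Closedness follows from the compactness of $[0,T]$ and the closedness of $B_n$: if $w_k\to w$ uniformly and $w_k(t_k)\in B_n$ with $t_k\in[0,T]$, I extract a subsequence $t_{k_j}\to t$, use uniform convergence to deduce $w_{k_j}(t_{k_j})\to w(t)$, and conclude $w(t)\in B_n$. The inclusion $\bigcap_n\Theta_{B_n}\supset\Theta_\Sg$ is immediate; for the reverse, given $w$ in the intersection I choose $t_n\in[0,T]$ with $w(t_n)\in B_n$ and extract $t_{n_j}\to t$; then for each fixed $m$, $w(t_{n_j})\in B_{n_j}\subset B_m$ whenever $n_j\ge m$, and closedness of $B_m$ forces $w(t)\in B_m$, hence $w(t)\in\Sg$.

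Next, $\Sg$ is closed with $\Cp_{d-4}(\Sg)=0$, and $0\notin\Sg$ in the intended application (this is indispensable, since $w(0)=0$ would yield $\Theta_{B_n}=E$ whenever $0\in B_n$); property~\eqref{eq:d4} then delivers $\Cp(\Theta_\Sg)=0$. To transport this to $\Cp(\Theta_{B_n})\to 0$, I fix $\eps>0$ and use tightness of $\Cp$ to pick a compact set $K\subset E$ with $\Cp(E\setminus K)<\eps$. Then $\{\Theta_{B_n}\cap K\}_n$ is a decreasing sequence of compact sets whose intersection equals $\Theta_\Sg\cap K$, so standard Choquet continuity on decreasing compact chains gives $\lim_n\Cp(\Theta_{B_n}\cap K)=\Cp(\Theta_\Sg\cap K)\le\Cp(\Theta_\Sg)=0$. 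Subadditivity $\Cp(\Theta_{B_n})\le\Cp(\Theta_{B_n}\cap K)+\Cp(E\setminus K)$ now yields $\limsup_n\Cp(\Theta_{B_n})\le\eps$, and since $\eps>0$ was arbitrary, the claim follows.

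The only nontrivial technical point is the tightness step: a Choquet capacity is automatically continuous from above only along decreasing \emph{compact} sequences, so we must carve out the non-compact tail of $\Theta_{B_n}$ by intersecting with a compact set of large capacity, a feature guaranteed by the quasi-regularity of $(\cE,\cF)$. All other ingredients--the topological identity $\bigcap_n\Theta_{B_n}=\Theta_\Sg$ and the invocation of~\eqref{eq:d4}--are essentially bookkeeping once the quantitative capacity comparison embedded in~\eqref{eq:d4} is taken for granted.
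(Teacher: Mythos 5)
Your proof is correct and follows essentially the same route as the paper's: reduce to $\Cp(\Theta_{\bigcap_n B_n})=0$ via \eqref{eq:d4}, then use tightness of $\Cp$ (a compact set $F$ with $\Cp(E\setminus F)<\eps$) together with continuity of the Choquet capacity along decreasing compact sequences, and finish by subadditivity. The details you make explicit --- closedness of $\Theta_{B_n}$, the identity $\bigcap_n\Theta_{B_n}=\Theta_{\bigcap_n B_n}$, and the implicit requirement $0\notin\bigcap_n B_n$ needed to invoke \eqref{eq:d4} --- are left tacit in the paper but are exactly what its argument relies on.
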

\begin{proof}
From \Eq{d4}, $\Cp(\Theta_{\bigcap_{n=1}^\infty B_n})=0$.
For $\eps>0$, take a compact subset $F$ of $E$ such that $\Cp(E\setminus F)<\eps$.
Since $\{\Theta_{B_n}\cap F\}_{n=1}^\infty$ is a decreasing sequence of compact sets, 
\[
\lim_{n\to\infty}\Cp(\Theta_{B_n}\cap F)=\Cp(\Theta_{\bigcap_{n=1}^\infty B_n}\cap F)=0.
\]
Therefore, 
\[
\lim_{n\to\infty}\Cp(\Theta_{B_n})\le\lim_{n\to\infty}\Cp(\Theta_{B_n}\cap F)+\Cp(E\setminus F)\le \eps.
\]
Since $\eps>0$ is arbitrary, we obtain the claim.
\end{proof}

\begin{proof}[Proof of \Thm{example}]
Recall the definition of  $A_r$ in \Eq{Agm}. We note that
\begin{equation}\label{eq:A}
\bigcap_{k=1}^\infty A_{1/k}=\Sg.
\end{equation}
From \Prop{path} and \Eq{slim},
\begin{align*}
\mu(\Xi_{\overline{O}}\setminus \Xi_{O})
&=\hat P_0\left[\inf_{t\in[0,T]}q(\om_t)=0\right]\\
&\le \lim_{n\to\infty}\hat P_0\left[\left\{\inf_{t\in[0,T]}q(\om_t)=0\right\}\cap\{\sg_{A_{1/n}}>T\}\right]
+\mu(\Theta_\Sg)\\
&=0.
\end{align*}
That is, $\bfone_{\Xi_{\overline{O}}}=\bfone_{\Xi_{O}}$ $\mu$-a.e.
Since $\bfone_{\Xi_{O}}$ is a lower semicontinuous function, $\bfone_{\Xi_{\overline{O}}}\in QR$ (cf.\ \cite[p.~230]{Fu}).

For $k\in\N$, let $B_k$ denote the closure of $A_{1/k}^c$, and let $F_k=\Xi_{B_k}$.
Then, $\{F_k\}_{k=1}^\infty$ is an $\cE$-nest from \Eq{slim}, \Eq{A}, and \Lem{dec}.
Since $A_{1/(k+1)}\subset B_k^c$, $\Theta_{A_{1/(k+1)}}\subset E\setminus F_k$ for each $k$.
Let $h(w)=\inf_{t\in[0,T]}q(w(t))$ for $w\in E$ and $\chi(t)=(0\vee t)\wg1$ for $t\in\R$.
For $m\in\N$, let $f_m(w)=\chi(m h(w))$ for $w\in E$.
Then, $f_m\in \D^{1,1}$ and $f_m$ converges to $\bfone_{\Xi_{O}}$ in $L^1$ as $m\to\infty$.
Moreover, for $k\in\N$ and $m\ge k+1$,
\begin{align*}
\|(\nabla f_m)\cdot \bfone_{F_k}\|_1
&\le \int_{F_k} m\cdot \bfone_{\{0\le h\le 1/m\}}\,d\mu\\
&\le m \mu\left(\{0\le h\le 1/m\}\setminus \Theta_{A_{1/(k+1)}}\right)\\
&\le (k+1)(d-1)+\hat C_2 T^{-1/2}
\quad\text{(from \Prop{path})}.
\end{align*}
Therefore, $\sup_{m\in\N}\|(\nabla f_m)\cdot\bfone_{F_k}\|_1<\infty$ for each $k\in\N$.
From \Thm{1}, we conclude that $\bfone_{\Xi_{\overline{O}}}\in \BVloc$.

The identity $\Cp^{\bfone_{\Xi_{\overline{O}}}}(\partial\Xi_{\overline{O}}\setminus\partial'\Xi_{\overline{O}})=0$ is proved in a similar way to the proof of \cite[Theorem~2.4]{HU}.
We provide details for the readers' convenience.
Take an $\cE$-nest $\{F'_k\}_{k=1}^\infty$ and functions $\{\ph_k\}_{k=1}^\infty$ obtained by applying \Lem{nest} to the $\cE$-nest $\{F_k\}_{k=1}^\infty$ defined above.
For $s\in(0,T)$, we define a closed set $V_{s}$ by
\[
 V_{s}=\left\{w\in E\;\vrule\; \inf_{t\in[0,s]}q(w(t))=0\text{ and } \inf_{t\in[s,T]}q(w(t))=0\right\}.
\]
Then, $\partial\Xi_{\overline{O}}\setminus\partial'\Xi_{\overline{O}}\subset \bigcup_{s\in(0,T)\cap \Q}V_{s}$.
Fix $s\in(0,T)$ and define a map $f\colon E\to\R^2$ by
\[
f(w)=\Bigl(\inf_{t\in[0,s]}q(w(t)),\ \inf_{t\in[s,T]}q(w(t))\Bigr).
\]
Fix $k\in\N$ and take $l\in\N$ such that $\ph_k\in \D^{1,2}_{F_l}$.
Let $\eps>0$.
Henceforth, $C$ denotes an unimportant positive constant independent of $\eps$ that may vary line by line.
Take a smooth function $g$ on $[0,\infty)$ such that
\[
g(t)=\begin{cases}
 1& t\in[0,e^{-2/\eps}],\\
 {-3\eps\log t}-4 & t\in[e^{-14/(9\eps)},e^{-13/(9\eps)}],\\
 0 & t\in[e^{-1/\eps},\infty),
\end{cases}
\]
and $-3\eps/t\le g'(t)\le0$ for all $t>0$.
We define a function $\zeta\colon {\R}^2\to\R$ by $\zeta(x,y)=\sqrt{x^2+y^2}$ and set
$\iota = g\circ \zeta $.
Since $\iota\circ f$ is a bounded $H$-Lipschitz continuous function, it belongs to ${\D}^{1,2}_b$.
Let $\eta=(\iota\circ f)\ph_k+e_{E\setminus F'_k}\in \D^{1,2}$.
Then, $\eta\ge1$ $\mu$-a.e.\ on some open set including $V_{s}$.
Since $(\iota\circ f)\ph_k\in{\D}^{1,2}_{F_{l}}$,
\begin{align*}
&\cE^{\bfone_{\Xi_{\overline{O}}}}(\eta|_{\Xi_{\overline{O}}},\eta|_{\Xi_{\overline{O}}})\\
&\le 2\cE^{\bfone_{\Xi_{\overline{O}}}}(((\iota\circ f)\ph_k)|_{\Xi_{\overline{O}}},((\iota\circ f)\ph_k)|_{\Xi_{\overline{O}}})+ 2\cE^{\bfone_{\Xi_{\overline{O}}}}(e_{E\setminus F'_k}|_{\Xi_{\overline{O}}},e_{E\setminus F'_k}|_{\Xi_{\overline{O}}})\\
&\le 2\int_{\Xi_{\overline{O}}\cap F_{l}} |\nabla (\iota\circ f)|^2_H \, d\mu+4\cE(\ph_k,\ph_k)+2\cE(e_{E\setminus F'_k},e_{E\setminus F'_k})
.
\end{align*}
Denoting the gradient operator on $\R^2$ by $\nabla_{\R^2}$, we have
\begin{align*}
\int_{\Xi_{\overline{O}}\cap F_{l}} |\nabla (\iota\circ f)|^2_H \, d\mu
&=\int_{\Xi_{\overline{O}\cap B_{l}}} |\langle (\nabla f)(w),(\nabla_{{\R}^2} \,\iota)(f(w)) \rangle_{{\R}^2}|^2_{H} \, \mu(dw)\\
&\le C\int_{\Xi_{\overline{O}\cap B_{l}}}  |(\nabla_{\R^2} \,\iota)(f(w))|_{\R^2}^2\, \mu(dw)\\
&=C\int_{\{(x,y)\in \R^2 \mid x\geq 0 ,y\geq 0\}}|\nabla_{\R^2} \,\iota|_{\R^2}^2\, d(f_*(\mu|_{\Xi_{\overline{O}\cap B_{l}}}))\\
&=:I_1.
\end{align*}
In the first line, $\langle \cdot,\cdot \rangle_{{\R}^2}$ denotes a pairing between elements in $H\otimes \R^2$ and in $\R^2$ that takes values in $H$.
We note that
\begin{align*}
|\nabla_{\R^2} \,\iota|_{\R^2}^2
&=\left({\partial \iota}/{\partial x}\right)^2+\left({\partial \iota}/{\partial y}\right)^2	\\
&=\left(g'\circ \zeta(x,y)\right)^2\frac{x^2}{x^2+y^2}+\left(g'\circ \zeta(x,y)\right)^2\frac{y^2}{x^2+y^2}\\
&=\left(g'\circ \zeta(x,y)\right)^2.
\end{align*}
By letting $\psi=(\zeta\circ f)_*(\mu|_{\Xi_{\overline{O}\cap B_{l}}})$, we obtain 
\[
	I_1=C\int_{0}^{\infty}g'(r)^2\,\psi(dr) \le 9C\eps^2\int_{e^{-2/\eps}}^{e^{-1/\eps}}r^{-2} \,\psi(dr)=:I_2.
\]
We now have
\begin{align*}
\Psi(r )&:=\psi([0,r])
= (f_*(\mu|_{\Xi_{\overline{O}\cap B_{l}}}))(\zeta^{-1}([0,r]))\\
&=\mu\left[\left\{ w\in \Xi_{\overline{O}\cap B_{l}} \;\vrule\; \inf_{t\in[0,s]}q(w(t))^2+\inf_{t\in[s,T]}q(w(t))^2 \le r^2\right\} \right]\\
&\le\mu\left[\left\{ w\in \Xi_{\overline{O}\setminus A_{1/(l+1)}} \;\vrule\; \begin{array}{l}0\le\inf_{t\in[0,s]}q(w(t))\le r,\\ 0\le\inf_{t\in[s,T]}q(w(t)) \le r\end{array}\right\} \right]\\
&\le\hat E_0\biggl[\hat P_{\om_{s}}\left[ 0\le \inf_{t\in[0,T-s]}q(\om_t)\le r,\ \sg_{A_{1/(l+1)}}>T-s  \right];\\*
&\qquad\qquad 0\le \inf_{t\in[0,s]}q(\om_t)\le r,\ \sg_{A_{1/(l+1)}}>s\biggr].
\end{align*}
From \Prop{path}, if $r\le1/(l+1)$, this expectation is dominated by
\begin{align*}
&((d-1)(l+1)+C(T-s)^{-1/2})r\hat P_0\left[ 0 \le \inf_{t\in[0,s]}q(\om_t)\le r,\ \sg_{A_{1/(l+1)}}>s  \right]\\
&\le  ((d-1)(l+1)+C(T-s)^{-1/2})((d-1)(l+1)+Cs^{-1/2})r^2\\
&=Cr^2.	
\end{align*}
Thus, for $\eps\le 1/\log(l+1)$,
\begin{align*}
I_2&=9C\eps^2\int_{e^{-2/\eps}}^{e^{-1/\eps}}r^{-2} \,d\Psi(r)\\
&=C\eps^2 \left\{\left[\frac{\Psi(r)}{r^2}\right]_{e^{-2/\eps}}^{e^{-1/\eps}}+\int_{e^{-2/\eps}}^{e^{-1/\eps}}\frac{2\Psi(r)}{r^3} \,dr\right\}\\
&\le C\eps^2  \Bigl(C+\int_{e^{-2/\eps}}^{e^{-1/\eps}}\frac{C}{r}  \,dr\Bigr)
=C\eps^2 (1+1/\eps).
\end{align*}
Therefore,
\begin{align*}
\Cp^{\bfone_{\Xi_{\overline{O}}}}(V_{s})
&\le \cE^{\bfone_{\Xi_{\overline{O}}}}(\eta|_{\Xi_{\overline{O}}},\eta|_{\Xi_{\overline{O}}}) + \|\eta|_{\Xi_{\overline{O}}}\|^2_{L^2(\mu|_{\Xi_{\overline{O}}})}\\
&\le 2\int_{\Xi_{\overline{O}}\cap F_{l}} |\nabla (\iota\circ f)|^2_H \, d\mu+4\cE(\ph_k,\ph_k)+2\cE(e_{E\setminus F'_k},e_{E\setminus F'_k})\\&\quad+2\Psi(e^{-1/\eps})+2\|e_{E\setminus F'_k}\|_2^2\\
&\le C\eps^2(1+1/\eps)+4\cE(\ph_k,\ph_k)+2\Cp(E\setminus F'_k)+Ce^{-2/\eps}.
\end{align*}
By letting $\eps \to 0$ and then $k\to\infty$, we obtain $\Cp^{\bfone_{\Xi_{\overline{O}}}}(V_{s})=0$.
Therefore, 
\[
\Cp^{\bfone_{\Xi_{\overline{O}}}}(\partial \Xi_{\overline{O}}\setminus \partial' \Xi_{\overline{O}})
\le\sum_{s\in(0,T)\cap\Q}\Cp^{\bfone_{\Xi_{\overline{O}}}}(V_{s})
=0.\tag*{\popQED}
\]
\end{proof}
\begin{remark}
In this section, we considered only one-sided pinned path spaces as the underlying space for simplicity.
However, the general idea of the argument is also valid for pinned path spaces, as discussed in \cite{HU}.
\end{remark}
\section*{Acknowledgements}
This research was partially supported by JSPS KAKENHI Grant Number 24540170.

\printindex                         
\end{document}